\numberwithin{equation}{section}
\newtheorem{theorem}{Theorem}[section]
\newtheorem{lemma}[theorem]{Lemma}
\theoremstyle{definition}
\theoremstyle{definition}
\newtheorem{remark}[theorem]{Remark}
\numberwithin{equation}{section}
\newcommand{\lc}
{\mathrel{\raise2pt\hbox{${\mathop<\limits_{\raise1pt\hbox
{\mbox{$\sim$}}}}$}}}
\newcommand{\gc}
{\mathrel{\raise2pt\hbox{${\mathop>\limits_{\raise1pt\hbox{\mbox{$\sim$}}}}$}}}
\newcommand{\ec}
{\mathrel{\raise2pt\hbox{${\mathop=\limits_{\raise1pt\hbox{\mbox{$\sim$}}}}$}}}
\def\bb{\begin{equation}} \def\ee{\end{equation}}
\def\beqn{\begin{eqnarray}}  \def\eqn{\end{eqnarray}}
\def\beqnx{\begin{eqnarray*}} \def\eqnx{\end{eqnarray*}}
\def\bn{\begin{enumerate}} \def\en{\end{enumerate}}
\def\bd{\begin{description}} \def\ed{\end{description}}
\def\label{\label}
\renewcommand{\leq}{\leqslant}
\renewcommand{\geq}{\geqslant}
\title{Quantitative unique continuation and observability on an equidistributed set for the diffusion equation in $\mathbb R^N$}
\author{Yueliang Duan \thanks{School of
Mathematics and Statistics,
Wuhan University, Wuhan 430072, China;
e-mail: ylduan@whu.edu.cn.},\quad
Huaiqiang Yu \thanks{School of Mathematics,
Tianjin University, Tianjin 300354, China;
e-mail: huaiqiangyu@tju.edu.cn.},
\quad Can Zhang
\thanks{School of Mathematics and Statistics; Computational
Science Hubei Key Laboratory,
Wuhan University, Wuhan 430072, China;
e-mail: zhangcansx@163.com}}
\begin{document}
\maketitle
\begin{abstract}
In this paper, we obtain a quantitative estimate of unique continuation
and an observability inequality from an equidistributed set for solutions of the diffusion equation in the whole space $\mathbb R^N$.  This kind of observability indicates that the total energy of solutions can be controlled by the energy localized in a measurable subset, which is equidistributed over the whole space. The proof of our results is based on an interesting reduction method \cite{LO,Lin90},  as well as the propagation of smallness for the gradient of solutions to elliptic equations \cite{Logunov-Malinnikova}.
\end{abstract}
\vskip 8pt

 \noindent\textbf{Keywords.} Unique continuation, Observability, Equidistributed set,  Diffusion equation.
    \medskip

       \noindent \textbf{2010 AMS Subject Classifications.} 35B60, 35K10, 93C20.
\vskip 10pt

\tableofcontents
\section{Introduction}\label{sec_intro}

Let $N$ be a positive integer, $\mathbb{N}^+:=\{1,2,\ldots\}$ and $\mathbb{R}^+:=(0,+\infty)$. Consider the linear diffusion equation over $\mathbb{R}^N$:
\begin{equation}\label{yu-6-24-1}
\left\{
\begin{array}{lll}
\partial_t u(x,t)-\sum\limits_{j,k=1}^N\partial_j\left(a_{jk}(x)\partial_ku(x,t)\right)=0&\mbox{in}\;\;\mathbb R^N\times\mathbb R^+,\\
		u(x,0)=u_0(x) &\mbox{in}\;\;\mathbb R^N,
\end{array}\right.
\end{equation}
where $u_0\in L^2(\mathbb R^N)$, $a_{jk}(x)=a_{kj}(x)$ for all $j,k=1,\dots,N$, and all $x\in\mathbb R^N$.  We assume the matrix-valued function $A(\cdot)=(a_{jk}(\cdot))_{j,k=1}^N$ is Lipschitz continuous and satisfies the uniform ellipticity condition, i.e.,
	  there is a constant $\lambda\geq1$ such that
\begin{equation}\label{yu-11-28-2}
	|a_{jk}(x)-a_{jk}(y)|\leq \lambda|x-y|\quad\text{and}\quad
    \lambda^{-1}|\xi|^2\leq \sum_{j,k=1}^N a_{jk}(x)\xi_j\xi_k \leq \lambda|\xi|^2
\end{equation}
for all $j,k=1,\dots,N$, $x, y\in\mathbb{R}^N$, and all $\xi=(\xi_1,\xi_2\cdots,\xi_N)\in\mathbb{R}^N$.

According to \cite[Theorem 10.9]{Brezis},   \eqref{yu-6-24-1} has a unique solution
$u\in L^2(\mathbb{R}^+; H^1(\mathbb{R}^N))\cap C([0,+\infty); L^2(\mathbb{R}^N))$.
Furthermore, if $u_0\in H^1(\mathbb R^N),$ then $u\in C([0,+\infty); H^1(\mathbb{R}^N))$
and $\partial_t u\in L^2(\mathbb{R}^+; L^2(\mathbb{R}^N))$.

Throughout the paper, $B_R(x_0)$ stands for an open ball in $\mathbb R^N$ with the center $x_0$ and of radius $R>0$, while
$B_R(x_0,0)$ stands for an open ball in $\mathbb R^{N+1}$ with the center $(x_0,0)\in\mathbb R^{N}\times\mathbb R$ and of radius $R>0$.
We denote the usual inner product and norm in $L^2(B_R(x_0))$ by $\langle\cdot,\cdot\rangle_{L^2(B_R(x_0))}$ and $\|\cdot\|_{L^2(B_R(x_0))}$, respectively. {Define the cube with center $x_0 =(x_{0,1},x_{0,2},\cdots,x_{0,N})$ and side length $R>0$ as $Q_R(x_0):=\{v=(v_1,v_2,\cdots,v_N):|v_i-x_{0,i}|\leq R,\;\forall i=1,2,\ldots,N\}$.  Denote by $\mathrm{int}(Q_{R}(x))$ the interior of $Q_{R}(x)$, and by $\partial B_R(x_0)$ the boundary of $B_R(x_0)$.
Write $\bar z$ for the conjugate of a complex number $z\in\mathbb C$, and $|\omega|$
for the Lebesgue measure for a measurable set $\omega\subset\mathbb R^N$.

\subsection{Main results}
The main results of this paper concerning the quantitative estimate of unique continuation   and the observability inequality  for solutions of \eqref{yu-6-24-1} are stated as follows.
		
\begin{theorem}\label{yu-theorem-7-10-6}
Let $T>0$, $\rho>0$ and $0<r<+\infty$. Let  $\{x_i\}_{i\in\mathbb{N}^+}\subset\mathbb R^N$ be arbitrarily fixed  so that
\begin{equation*}
   \mathbb{R}^{N}=\bigcup_{i\in\mathbb{N}^+}Q_{r}(x_{i})
\quad \text{with}\quad \mathrm{int}(Q_{r}(x_{i}))\bigcap \mathrm{int}(Q_{r}(x_{j}))=\emptyset\quad \text{for each}\quad i\neq j\in\mathbb N^+.
\end{equation*}
   Let $\{\omega_i\}_{i\in\mathbb{N}^+}$ be some N-dimensional Lebesgue measurable sets in $\mathbb{R}^N$ so that
$$\omega_{i} \subset B_{\frac r2}(x_{i})\quad\text{and}\quad \frac{|\omega_{i}|}{|B_{r}(x_i)|}\geq\rho \quad\text{for each}\quad i\in\mathbb N^+. $$
 Then there exist constants $C=C(r,N,\lambda,\rho)>0$ and  $\sigma=\sigma(r,N,\lambda,\rho)\in(0,1)$ such that for any $u_{0}\in L^{2}(\mathbb R^N)$, the corresponding solution $u$ of \eqref{yu-6-24-1} satisfies
\begin{equation*}\label{yu-7-10-2}
	\int_{\mathbb R^N}|u(x,T)|^2dx\leq Ce^{C(T+\frac{1}{T})}\left(\int_{\omega}|u(x,T)|^2dx\right)^{\sigma}\left(\int_{\mathbb R^N}|u_{0}(x)|^2dx\right)^{1-\sigma},
	\end{equation*}
    where $\omega:=\cup_{i\in\mathbb{N}^+} \omega_i$.
\end{theorem}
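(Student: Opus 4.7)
The plan is to carry out the two-step programme announced in the abstract. Step (i) reduces the terminal-time estimate for \eqref{yu-6-24-1} to an interpolation inequality for a divergence-form elliptic equation in one extra dimension, in the spirit of the Lin--Escauriaza construction; Step (ii) applies the Logunov--Malinnikova quantitative propagation of smallness for elliptic solutions on each cell of the tiling $\{Q_r(x_i)\}$ and patches the resulting local estimates into a single global inequality.

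For Step (i), denoting the operator in \eqref{yu-6-24-1} by $\mathcal L u=\sum_{j,k}\partial_j(a_{jk}\partial_k u)$, I would construct an auxiliary function $v(x,s)$ on $\mathbb{R}^N\times(-L,L)$, with $L=L(T)>0$, that solves the uniformly elliptic divergence-form equation
\begin{equation*}
\sum_{j,k=1}^N \partial_j\bigl(a_{jk}(x)\partial_k v(x,s)\bigr) + \partial_{ss}v(x,s) = 0,
\end{equation*}
for instance via a Poisson-type extension of $u(\cdot,T)$ in the variable $s$, realised as a Gaussian-weighted time integral of $u$. The construction should enjoy two key properties: (a) some trace of $v$ on $\mathbb{R}^N\times\{0\}$ agrees with $u(\cdot,T)$, so that the local norm $\|v\|_{L^2(\omega\times(-\delta,\delta))}$ (or a corresponding gradient norm) is controlled by $\|u(\cdot,T)\|_{L^2(\omega)}$; and (b) the global norm satisfies $\|v\|_{L^2(\mathbb{R}^N\times(-L,L))}\leq Ce^{C(T+1/T)}\|u_0\|_{L^2(\mathbb{R}^N)}$. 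The factor $e^{C/T}$ arises naturally from the Gaussian weight used to localise at time $T$, while $e^{CT}$ absorbs the length $L$ and the growth of $v$ in the new variable.

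For Step (ii), fix $i\in\mathbb{N}^+$ and consider the cylinder $\Omega_i:=B_{2r}(x_i)\times(-L,L)\subset\mathbb{R}^{N+1}$, on which $v$ satisfies a uniformly elliptic divergence-form equation whose ellipticity constants depend only on $\lambda$ from \eqref{yu-11-28-2}. Since $\omega_i\subset B_{r/2}(x_i)$ has relative measure at least $\rho$, the product set $E_i:=\omega_i\times(-\delta,\delta)$ has a positive quantitative density in the sub-cylinder $B_r(x_i)\times(-L/2,L/2)$. The Logunov--Malinnikova inequality then yields, with $\alpha=\alpha(N,\lambda,\rho,r)\in(0,1)$ uniform in $i$ (by translation-invariance of \eqref{yu-11-28-2} after recentring),
\begin{equation*}
\|v\|_{L^2(B_{r}(x_i)\times(-L/2,L/2))} \;\leq\; C\,\|v\|_{L^2(E_i)}^{\alpha}\,\|v\|_{L^2(\Omega_i)}^{1-\alpha}.
\end{equation*}
Squaring and summing over $i$, using the tiling property of $\{Q_r(x_i)\}$, the bounded overlap of $\{B_{2r}(x_i)\}$, and the discrete H\"{o}lder inequality $\sum_i a_i^{\alpha}b_i^{1-\alpha}\leq(\sum_i a_i)^{\alpha}(\sum_i b_i)^{1-\alpha}$, yields a global elliptic interpolation for $\|v\|_{L^2(\mathbb{R}^N\times(-L/2,L/2))}$; combining this with (a) (which delivers a lower bound of the form $\|v\|_{L^2(\mathbb{R}^N\times(-L/2,L/2))}\geq c\,L^{1/2}\|u(\cdot,T)\|_{L^2(\mathbb{R}^N)}$) and (b) produces the announced inequality with $\sigma=\alpha$ and prefactor $Ce^{C(T+1/T)}$.

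The principal technical difficulty is Step (i): constructing the elliptic extension so that (a) and (b) hold simultaneously, with the correct divergence-form PDE (required by Logunov--Malinnikova) and the sharp $T$-dependence in (b). A secondary subtlety is guaranteeing that the Logunov--Malinnikova constants are truly uniform across the infinite family of cubes; this should follow from a translation/rescaling argument that leaves the ellipticity hypothesis \eqref{yu-11-28-2} invariant, so that $\alpha$ depends only on $(N,\lambda,\rho,r)$.
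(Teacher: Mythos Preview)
Your two-step programme matches the paper's at the level of strategy: a parabolic-to-elliptic reduction in an added variable, followed by Logunov--Malinnikova on each cell and a summation via discrete H\"older/Young. But there is a genuine gap in how you link the elliptic smallness set to the terminal data. You take $E_i=\omega_i\times(-\delta,\delta)$ as a full-dimensional set and want $\|v\|_{L^2(E_i)}$ to be controlled by $\|u(\cdot,T)\|_{L^2(\omega_i)}$. If the trace you impose is $v(\cdot,0)=u(\cdot,T)$, this does not follow: for $s\neq 0$ the slice $v(\cdot,s)$ involves $\partial_s v(\cdot,0)$ and higher derivatives, which are not dominated by $u(\cdot,T)$ on $\omega_i$, and taking $\delta$ small enough to suppress them would make $\delta$ depend on the solution. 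The paper avoids this by choosing the other normalisation, $V(x,0)=0$ and $V_y(x,0)=u(x,\tau)$, so that on the hyperplane $\{y=0\}$ the full gradient $(\nabla V,V_y)$ reduces to $(0,u(\cdot,\tau))$; it then invokes the Logunov--Malinnikova estimate for \emph{gradients} restricted to sets on a hyperplane (Lemma~\ref{auxiliary-lemma}) and converts $L^\infty$ to $L^2$ by a Chebyshev sub-level-set trick (Lemma~\ref{yu-proposition-7-1-1}). This is the missing idea in your Step~(ii).

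For Step~(i), your ``Poisson-type extension via a Gaussian-weighted time integral'' is too vague to yield the divergence-form equation $\mathrm{div}(A\nabla V)+V_{yy}=0$ with the stated bounds. The paper does not build a global extension at all; it works \emph{locally} on each ball $B_R(x_0)$, splits $u=v_1+u_2$ into a piece matching the lateral boundary data and a Dirichlet piece, and constructs $V=V_1+V_2$ separately: $V_1$ via the Fourier transform in time of $v_1$ followed by the multiplier $\sinh(\sqrt{-i\mu}y)/\sqrt{-i\mu}$ (with exponential decay in $\mu$ supplied by Lemmas~\ref{yu-lemma-6-10-1}--\ref{yu-lemma-6-18-1} and an analytic-continuation argument), and $V_2$ via the Dirichlet eigenfunction expansion with $\sinh(\sqrt{\mu_k}y)/\sqrt{\mu_k}$; the factor $e^{C/T}$ emerges from this $V_2$ piece. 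A global spectral-calculus extension $V(\cdot,y)=\sinh(y\sqrt{-\mathcal L})(-\mathcal L)^{-1/2}u(\cdot,T)$ would realise your programme and may well give a shorter route, but you would still need the normalisation $V(\cdot,0)=0$, $V_y(\cdot,0)=u(\cdot,T)$ and the hyperplane-gradient version of propagation of smallness, not the product-set formulation you wrote.
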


\medskip

\begin{theorem}\label{jiudu4}
Let $E$ be a subset of positive Lebesgue measure in $(0,T)$. Under the same assumptions in Theorem \ref{yu-theorem-7-10-6}, then there exist constants $C=C(r,N,\lambda,\rho)>0$ and  $\tilde{C}=\tilde{C}(E, r,N,\lambda,\rho)>0$  such that for any $u_{0}\in L^{2}(\mathbb R^N)$, the corresponding  solution $u$ of \eqref{yu-6-24-1} satisfies
	\begin{equation*}
	\int_{\mathbb R^N}|u(x,T)|^2dx\leq e^{\tilde{C}+CT}\int_{\omega\times E}|u(x,t)|^2dxdt.
\end{equation*}
\end{theorem}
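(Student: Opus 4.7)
The plan is to follow the classical Phung--Wang telescoping argument, bootstrapping Theorem \ref{yu-theorem-7-10-6} at a Lebesgue density point of $E$. Since the coefficients of \eqref{yu-6-24-1} are time-independent, applying Theorem \ref{yu-theorem-7-10-6} to the time-shifted solution $v(x,s):=u(x,s+\tau)$ gives, for every $0\leq\tau<t\leq T$,
\begin{equation*}
\|u(\cdot,t)\|_{L^2(\mathbb R^N)}^2 \leq C\,e^{C((t-\tau)+\frac{1}{t-\tau})}\,\|u(\cdot,t)\|_{L^2(\omega)}^{2\sigma}\,\|u(\cdot,\tau)\|_{L^2(\mathbb R^N)}^{2(1-\sigma)}.
\end{equation*}
Young's inequality $ab\leq\epsilon a^{1/(1-\sigma)}+C_\sigma\epsilon^{-(1-\sigma)/\sigma}b^{1/\sigma}$ then converts this to the additive form
\begin{equation*}
\|u(\cdot,t)\|_{L^2(\mathbb R^N)}^2 \leq \epsilon\,\|u(\cdot,\tau)\|_{L^2(\mathbb R^N)}^2 + C\,\epsilon^{-(1-\sigma)/\sigma}\,e^{C((t-\tau)+\frac{1}{t-\tau})/\sigma}\,\|u(\cdot,t)\|_{L^2(\omega)}^2
\end{equation*}
for every $\epsilon>0$.

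Next I would fix a Lebesgue density point $\ell\in(0,T)$ of $E$, choose a parameter $\mu\in\bigl(1,1/\sqrt{1-\sigma}\bigr)$, and pick $\rho_0\in(1/\mu,1)$. By the density property there exists $\ell^{*}\in(\ell,T]$ such that $|E\cap(\ell,s)|\geq\rho_0(s-\ell)$ for every $s\in(\ell,\ell^{*})$. The strictly decreasing geometric sequence $z_k:=\ell+(\ell^{*}-\ell)\mu^{-k}$ then satisfies $|E\cap[z_{k+1},z_k]|\geq(\rho_0\mu-1)(\ell^{*}-\ell)\mu^{-k-1}>0$. Applying the additive inequality with $\tau=z_{k+2}$ and $t\in E\cap[z_{k+1},z_k]$, exploiting the $L^2$-contraction of the diffusion semigroup (so $\|u(\cdot,z_k)\|_{L^2}\leq\|u(\cdot,t)\|_{L^2}$ for $t\leq z_k$) and averaging in $t$ over $E\cap[z_{k+1},z_k]$, I obtain the two-step recursion
\begin{equation*}
a_k \leq \epsilon_k\,a_{k+2} + D_k(\epsilon_k)\int_{E\cap[z_{k+1},z_k]}\|u(\cdot,t)\|_{L^2(\omega)}^2\,dt,
\end{equation*}
where $a_k:=\|u(\cdot,z_k)\|_{L^2(\mathbb R^N)}^2$ and $D_k(\epsilon_k)\leq C\mu^k(\ell^{*}-\ell)^{-1}\epsilon_k^{-(1-\sigma)/\sigma}\exp\bigl(C\mu^k/((\ell^{*}-\ell)\sigma)\bigr)$.

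The final step is to iterate along the even subsequence $(z_{2j})$ with the choice $\epsilon_{2j}:=\exp(-\beta\mu^{2j})$ for $\beta$ sufficiently large. The constraint $\mu^2<1/(1-\sigma)$ is exactly what makes $\prod_{j<k}\epsilon_{2j}=\exp\bigl(-\beta(\mu^{2k}-1)/(\mu^2-1)\bigr)$ decay strictly faster in $k$ than $D_{2k}(\epsilon_{2k})$ grows, so the telescoping tail $\prod_{j<K}\epsilon_{2j}\cdot a_{2K}\leq\|u_0\|_{L^2}^2\prod_{j<K}\epsilon_{2j}\to 0$ vanishes and the series $\sum_k\prod_{j<k}\epsilon_{2j}\cdot D_{2k}(\epsilon_{2k})$ converges to a quantity bounded by $D_0$. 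Since the intervals $E\cap[z_{2k+1},z_{2k}]$ are pairwise disjoint subsets of $E$, this yields $\|u(\cdot,\ell^{*})\|_{L^2(\mathbb R^N)}^2\leq D_0\int_{\omega\times E}|u(x,t)|^2\,dxdt$. Writing $D_0=e^{\tilde C+CT}$, where $C=C(r,N,\lambda,\rho)$ absorbs the $e^{C(t-\tau)/\sigma}$ contribution with $t-\tau\leq T$, and $\tilde C=\tilde C(E,r,N,\lambda,\rho)$ carries all dependence on $\ell$, $\ell^{*}$, $\rho_0$ and the $e^{C/((t-\tau)\sigma)}$ pieces, a final application of the semigroup contraction $\|u(\cdot,T)\|_{L^2}\leq\|u(\cdot,\ell^{*})\|_{L^2}$ delivers the desired observability.

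The main obstacle will be the calibration of $\mu$, $\rho_0$, $\beta$: for small $\sigma$ the admissible window $\mu\in(1,1/\sqrt{1-\sigma})$ is narrow, which forces $\rho_0$ close to $1$ --- precisely what the Lebesgue density point provides. Verifying rigorously that the residual term vanishes, that the geometric series converges, and that the $E$-dependent quantities can be separated from the $CT$ factor in the final exponential is the technical heart of the argument.
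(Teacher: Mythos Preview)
Your proposal is correct and follows essentially the same Phung--Wang telescoping argument as the paper: both start from Theorem \ref{yu-theorem-7-10-6}, pass to the additive form via Young's inequality, build a geometric sequence of times converging to a Lebesgue density point of $E$, integrate over $E\cap[z_{k+1},z_k]$, and telescope. The only differences are cosmetic: the paper cites \cite[Proposition~2.1]{Phung-Wang-2013} for the density-point sequence (which yields $l_m-l_{m+1}\leq 3|E\cap(l_{m+1},l_m)|$ directly) while you construct it by hand with the parameter $\rho_0>1/\mu$; and the paper fixes the single value $\kappa=\sqrt{(\alpha+2)/(\alpha+1)}$ that makes the weighted differences $e^{-(2+\alpha)d\kappa^{m}}a_m-e^{-(2+\alpha)d\kappa^{m+2}}a_{m+2}$ telescope exactly, whereas you work with the open range $\mu^2<1/(1-\sigma)$ and verify convergence of the product series.
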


\medskip

Several remarks are given below.
\begin{remark}
When $E=(0,T)$,  the constant $\widetilde{C}(E, r, N,\lambda,\rho)$ in
Theorem \ref{jiudu4}  can take the form $\widetilde{C}( r,N,\lambda,\rho)/T$.
\end{remark}

\begin{remark}
When $A(\cdot)\equiv I$ (the $N\times N$ order identity matrix) in the equation \eqref{yu-6-24-1},  the above two theorems have been already established in \cite{WangZhangZhang} for solutions of the heat equation in $\mathbb R^N$, when the observation set is thick. The proofs there are based on quantitative estimates from measurable sets for real analytic functions, and uncertainty principle. To the best of our knowledge, however, this approach cannot be extended to the case of variable coefficients.
In the present paper, we built a new approach to obtain the
observability inequality for the diffusion equation \eqref{yu-6-24-1}.
\end{remark}

\begin{remark}\label{remark409}
Recall that  a subset $\mathcal{O}\subset\mathbb R^N$ is called thick with parameters $L>0$ and $\gamma>0$ means that in each cube $Q_L\subset\mathbb R^N$ with the length $L$, the $N$-dimensional Lebesgue measure of $\mathcal{O}\cap Q_L$ is bigger than or equals to $\gamma L^N$.
As the thick set $\mathcal{O}$ is almost dense in the whole space, one may imagine that such kind of thick set has a close
 relationship with the equidistributed set $\omega$ as defined in
Theorem \ref{yu-theorem-7-10-6}.
Indeed,  an equidistributed set $\omega$ is clearly a thick set.
On the other hand,  if $\mathcal{O}$ is a
thick set, then one could construct an equidistributed
set $\omega$ to be such that $\omega \subset \mathcal{O}$.
\end{remark}

\begin{remark}\label{re522}
By \cite[Lemma 2.5]{WangZhangZhang}, one can obtain that the equidistributed observation set is also a necessary condition such that the observability inequality holds  for the diffusion equation \eqref{yu-6-24-1}.
\end{remark}

\begin{remark}
It is also worth mentioning that the authors in \cite{WangZhangZhang, EV17} have proved that the observability inequality for the heat equation holds if and only if the observation set is thick.  Combined with Remarks \ref{remark409} and \ref{re522} above, one can finally conclude that the observability inequality for the diffusion equation \eqref{yu-6-24-1} holds if and only if the observation set is thick or equidistributed in $\mathbb R^N$.
\end{remark}

\subsection{Motivation and novelty}
The observability inequality for linear parabolic equations on bounded domains are previously studied in a large number of publications. When $E$ is the whole time interval and the observation region $\omega$ is a nonempty open subset, we refer the readers to \cite{FI,fz} and a vast number of references therein for the observability inequality for parabolic equations. Their approach is mainly based on the Carleman inequality method.
When $E$ is only a subset of positive Lebesgue measure in the time interval
and the observation region $\omega$ is a non-empty open subset, we refer the readers to \cite{Apraiz-Escauriaza-Wang-Zhang,Phung-Wang-2013, Phung-Wang-Zhang,wang-zhang1} for the  observability inequality for parabolic equations.  More generally,
when the observation subdomain is a measurable subset of positive measure in the space and time variables, we refer the readers to \cite{Escauriaza-Montaner-Zhang,Escauriaza-Montaner-Zhang2,Apraiz-Escauriaza22} for the observability inequality for analytic parabolic equations. The proofs of those inequalities are mainly based on the parabolic frequency function method, and the propagation of smallness estimate for real analytic functions.
Note that the solution to \eqref{yu-6-24-1} is obviously not spatial real-analyticity any more, and thus we cannot derive the observability inequality from measurable sets by the real analytic technique.

The studies on the observability inequality for parabolic equations on unbounded domains are rather few in last decades. The observability inequality may not be true when the heat equation is evolving in the whole space and the observation subdomain is only a bounded and open subset (see, e.g., \cite{MZ,MZb,M05a}). We would like to mention the work \cite{CMZ} for sufficient conditions so that the observability inequalities hold true for heat equations in unbounded domains. It showed that, for some parabolic equations in an unbounded domain
$\Omega \subset \mathbb{R}^N$,  the observability inequality holds when  observations are made over an open subset $\omega \subset\Omega$, with $\Omega \backslash \omega$ bounded. The proofs are based on Carleman estimates.  For other similar results, we refer the reader  to \cite{B,CMV,Gde,RM,Z16}.

Recently, \cite{WangZhangZhang} and \cite{EV17} independently obtained the observability inequality for the pure heat equation on the whole space, where the observation is a thick set (see Remark \ref{remark409}). This could be extended to
the time-independent parabolic equation associated to the Schr\"odinger operator with analytic coefficients (see \cite{EV19,leb,nttv}).
The methods utilized in these papers are all based on the spectral inequality (or uncertainty principle). Unfortunately, they are not valid any more for the case that the coefficients
in parabolic equations are non-analytic.

A  H$\ddot{\mathrm{o}}$lder type propagation of smallness for solutions, as well as
for their gradients, on a possibly lower dimensional Hausdorff subset to general second order elliptic equations has been established in \cite{Logunov-Malinnikova}. The approach in
\cite{Logunov-Malinnikova} does not base on the analyticity of solutions.
This result stimulates our motivation and opens
a possible way to establish the observability inequality from measurable sets for solutions of general parabolic equations, getting rid of the above-mentioned technique of analyticity.

Recently, quantitative estimates of unique continuation for solutions of second order parabolic equations, such as the doubling property, as well as the two-ball and one-cylinder inequality, have been well understood (see, e.g., \cite{Apraiz-Escauriaza-Wang-Zhang},  \cite{Canuto-Rosset-Vessella}, \cite{Duan-Wang-Zhang}, \cite{Escauriaza-Fernandez-Vessella-2006}, \cite{LO}, \cite{Lin90}, \cite{Phung-Wang-2010}, \cite{Phung-Wang-2013}).

The main effort of this paper is to prove a locally quantitative estimate of strong unique continuation  for solutions to the diffusion equation,  by utilizing the propagation of smallness estimate for gradients of solutions to elliptic equations.
Combing with the above local result and the geometry of the equidistributed observation, we obtain a globally quantitative estimate at one time point for solutions of the diffusion equation. We finally apply the telescoping series method to prove the desired observability inequality.

\subsection{Plan of this paper}
The rest of this paper is organized as follows. In Section \ref{sc2}, we give several auxiliary lemmas.  Sections \ref{kaodu3} and \ref{finalproof}  prove Theorem \ref{yu-theorem-7-10-6} and Theorem \ref{jiudu4},  respectively.

\section{Some auxiliary lemmas}\label{sc2}

In this section, we first show two exponential estimates (see Lemmas \ref{yu-lemma-6-10-1} and \ref{yu-lemma-6-18-1} below)  in Section \ref{kaodu1}, and then give a quantitative estimate of Cauchy uniqueness (see Lemma \ref{yu-proposition-7-1-1} below) in Section \ref{yu-section-7-26-3}.


\subsection{Local energy estimates}\label{kaodu1}

Let $T>0$ and $\tau\in(0,T)$ be arbitrarily fixed.
Let $\eta\in C^\infty(\mathbb{R}^+;[0,1])$ be a cutoff function satisfying
\begin{equation}\label{yu-6-6-6}
\begin{cases}
	\eta= 1 &\mbox{in}\;\;[0,\tau],\\
	\eta=0 &\mbox{in}\;\; [T,+\infty),\\
	|\eta_t|\leq \frac{C_{1}}{T-\tau}&\mbox{in}\;\;(\tau,T),
\end{cases}
\end{equation}
where the positive  constant $C_{1}$ is independent  of $\tau$ and $T$.

Let $R>0$ and $x_0\in\mathbb R^N$.  Let $v$ be the solution to
    \begin{equation}\label{yu-11-29-4}
\begin{cases}
    v_t-\mbox{div}(A(x)\nabla v)=0&\mbox{in}\;\;B_{R}(x_0)\times\mathbb{R}^+,\\
    v=\eta u&\mbox{on}\;\;\partial B_R(x_0)\times\mathbb{R}^+,\\
    v(\cdot,0)=0&\mbox{in}\;\; B_R(x_0),
\end{cases}
\end{equation}
where $u$ satisfies  \eqref{yu-6-24-1} with the initial value $u_0\in H^1(\mathbb{R}^N)$ and $\eta$ verifies \eqref{yu-6-6-6}.

Then, we have the following  exponential decay result.
\begin{lemma}\label{yu-lemma-6-10-1}
There exist positive constants $\hat{C}_{1}=\hat{C}_{1}(\lambda)$ and $\hat{C}_{2}=\hat{C}_{2}(\lambda,N)$ such that
\begin{equation}\label{yu-6-18-1}
 	\|v(\cdot,t)\|_{H^1(B_R(x_0))}\leq \hat{C}_{1}\left(1+\frac{1}{T}\right)^{\frac{1}{2}}
 e^{\frac{\hat{C}_{1}T}{T-\tau}-\hat{C}_{2}R^{-2}(t-T)^+}F(R)\quad\text{for all}\;\; t\in\mathbb{R}^+,
 \end{equation}
where $(t-T)^+:=\max\{0,t-T\}$ and
	$F(R):=\sup\limits_{s\in[0,T]}\|u(\cdot,s)\|_{H^1(B_R(x_0))}$.
\end{lemma}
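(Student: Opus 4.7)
The plan is to reduce the inhomogeneous boundary value problem to a Dirichlet one, perform energy estimates for $t\in[0,T]$, and use a spectral-decay argument for $t>T$. Let $w:=v-\eta u$; since $\eta$ depends only on $t$ and $u$ satisfies \eqref{yu-6-24-1}, the function $w$ satisfies
\[
w_t-\mathrm{div}(A\nabla w)=-\eta_t\,u \quad\text{in } B_R(x_0)\times\mathbb R^+,
\]
with zero Dirichlet trace on $\partial B_R(x_0)$ and initial datum $w(\cdot,0)=-u_0\big|_{B_R(x_0)}\in H^1(B_R(x_0))$. Controlling $v$ then reduces to controlling $w$, since $\|\eta u\|_{H^1(B_R(x_0))}\leq F(R)$.

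For $t\in[0,T]$ I would carry out two parallel energy estimates. Multiplying the $w$-equation by $w$ and integrating, using that $\eta_t$ is supported in $(\tau,T)$ with $|\eta_t|\leq C_1/(T-\tau)$ and that $\|u(s)\|_{L^2(B_R(x_0))}\leq F(R)$ for $s\in[0,T]$, yields
\[
\|w(t)\|_{L^2}^2+2\lambda^{-1}\int_0^t\|\nabla w\|_{L^2}^2\,ds\leq F(R)^2+2C_1F(R)\sup_{0\leq s\leq t}\|w(s)\|_{L^2},
\]
which is a quadratic inequality in $M(t):=\sup_{s\leq t}\|w(s)\|_{L^2}$ and gives $\|w(t)\|_{L^2}\leq CF(R)$. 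For a pointwise $H^1$ bound I would multiply instead by $w_t$, use the symmetry of $A$ to write $\int A\nabla w\cdot\nabla w_t=\tfrac12\tfrac{d}{dt}\int A\nabla w\cdot\nabla w$, and absorb $\tfrac12\|w_t\|_{L^2}^2$ on the left against the Cauchy--Schwarz splitting of the source term. Integrating from $0$ to $t\leq T$ and using $\int A\nabla u_0\cdot\nabla u_0\leq \lambda F(R)^2$ then leads to
\[
\|\nabla w(t)\|_{L^2}^2\leq C\Big(1+\frac{1}{T-\tau}\Big)F(R)^2 \quad\text{for } t\in[0,T].
\]

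For $t>T$, the cutoff $\eta$ vanishes, so $v$ solves the homogeneous equation on $B_R(x_0)$ with zero Dirichlet data and initial datum $v(\cdot,T)$. I would invoke the orthonormal eigenbasis $\{\phi_k,\mu_k\}$ of the self-adjoint operator $\mathcal L:=-\mathrm{div}(A\nabla\cdot)$ on $H^1_0(B_R(x_0))$; uniform ellipticity together with the comparison with the Dirichlet Laplacian yields $\mu_1\geq c(N)\lambda^{-1}R^{-2}$. Writing $v(t)=\sum_k e^{-\mu_k(t-T)}c_k\phi_k$ and using $\mu_k e^{-2\mu_k(t-T)}\leq \mu_k e^{-2\mu_1(t-T)}$ gives
\[
\|\nabla v(t)\|_{L^2}^2\leq \lambda\sum_k\mu_k c_k^2 e^{-2\mu_k(t-T)}\leq \lambda^2 e^{-2\mu_1(t-T)}\|\nabla v(T)\|_{L^2}^2,
\]
and analogously $\|v(t)\|_{L^2}\leq e^{-\mu_1(t-T)}\|v(T)\|_{L^2}$. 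Chaining with the bounds at $t=T$ from the previous step produces the desired exponential factor in $R^{-2}(t-T)^+$.

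The remaining algebra absorbs $(T-\tau)^{-1/2}$ into the stated form: applying the elementary inequality $x\leq e^{x-1}$ with $x=T/(T-\tau)\geq 1$ gives $(T-\tau)^{-1/2}\leq e^{-1/2}T^{-1/2}e^{T/(2(T-\tau))}\leq (1+1/T)^{1/2}e^{T/(2(T-\tau))}$, so $1+(T-\tau)^{-1/2}$ is dominated by $C(1+1/T)^{1/2}e^{CT/(T-\tau)}$. I expect the main technical point to be the exponential $H^1$ decay for $t>T$: the plain energy identity $\tfrac{d}{dt}\int A\nabla v\cdot\nabla v=-2\|v_t\|_{L^2}^2$ gives only monotonicity, and it is precisely the spectral bound $\mu_1\gtrsim \lambda^{-1}R^{-2}$ that converts this into the decay rate $R^{-2}$ required in the statement.
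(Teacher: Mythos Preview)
Your proposal is correct and follows the paper's approach closely: the same substitution $w=v-\eta u$, the same pair of energy estimates (testing against $w$ and against $w_t$) for $t\in[0,T]$, and the same spectral/Poincar\'e argument for the exponential $H^1$-decay when $t>T$. Your two deviations are both harmless sharpenings: you replace the paper's Gronwall step for the $L^2$-bound by the quadratic inequality $M(t)^2\leq F(R)^2+2C_1F(R)M(t)$, which avoids the factor $e^{C_1T/(T-\tau)}$ in the $L^2$ part; and in the $\nabla w$ estimate you exploit that $\eta_t$ is supported on $(\tau,T)$ to obtain $C(1+1/(T-\tau))$ instead of the paper's $CT/(T-\tau)^2$, which then feeds into your clean algebraic reduction via $x\leq e^{x-1}$ to the stated form.
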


\begin{proof}
We proceed the proof into two steps as follows.

\par
\vskip 5pt
    \textit{Step 1. To prove \eqref{yu-6-18-1} when $t\in[0,T]$.}

Setting $w=v-\eta u$ in $B_R(x_0)\times\mathbb R^+$, we see that  $w$ verifies
\begin{equation}\label{yu-6-6-7}
\begin{cases}
	w_s-\mbox{div}(A(x)\nabla w)=-\eta_su
	&\mbox{in}\;\;B_R(x_0)\times\mathbb R^+,\\
	w=0&\mbox{on}\;\;\partial B_R(x_0)\times\mathbb R^+,\\
	w(\cdot,0)=-u_{0}&\mbox{in}\;\;B_R(x_0).
\end{cases}
\end{equation}
	We next prove that for each $t\in[0,T]$,
\begin{equation}\label{yu-6-7-6}
	\|w(\cdot,t)\|_{L^2(B_R(x_0))}\leq e^{\frac{C_{1}T}{T-\tau}}
	\sup_{s\in[0,T]}\|u(\cdot,s)\|_{L^2(B_R(x_0))}.
\end{equation}

Indeed, multiplying  first (\ref{yu-6-6-7}) by $w$ and then integrating by parts over $B_R(x_0)\times(0,t)$ lead to
\begin{eqnarray*}\label{yu-6-7-1}
	&\;&\| w(\cdot,t)\|_{L^2(B_R(x_0))}^2+2\int_0^t\int_{B_R(x_0)}A\nabla
w\cdot\nabla w\,dxds\nonumber\\
	&\leq&\| u_{0}\|_{L^2(B_R(x_0))}^2+\frac{C_{1}t}{T-\tau}\sup_{s\in[0,T]}\|u(\cdot,s)\|^2_{L^2(B_R(x_0))}+\frac{C_{1}}{T-\tau}\int_0^t\int_{B_R(x_0)}|w|^2dxds\nonumber\\
&\leq&\left(1+\frac{C_{1}T}{T-\tau}\right)\sup_{s\in[0,T]}\|u(\cdot,s)\|^2_{L^2(B_R(x_0))}+\frac{C_{1}}{T-\tau}\int_0^t\int_{B_R(x_0)}|w|^2dxds.
\end{eqnarray*}
 Here, we used (\ref{yu-6-6-6}). This, together with the uniform ellipticity condition (see (\ref{yu-11-28-2})), implies that
$$\| w(\cdot,t)\|_{L^2(B_R(x_0))}^2\leq \left(1+\frac{C_{1}T}{T-\tau}\right)\sup_{s\in[0,T]}\|u(\cdot,s)\|^2_{L^2(B_R(x_0))}+\frac{C_{1}}{T-\tau}\int_0^t\int_{B_R(x_0)}|w|^2dxds.$$
By the Gronwall inequality, we get (\ref{yu-6-7-6}) immediately.
 From (\ref{yu-6-7-6}) and the definition of $w$, we know that for any $t\in[0,T]$,
\begin{equation}\label{yu-6-7-11-1}
	\|v(\cdot,t)\|_{L^2(B_R(x_0))}\leq 2e^{\frac{C_{1}T}{T-\tau}}
	\sup_{s\in[0,T]}\|u(\cdot,s)\|_{L^2(B_R(x_0))}.
\end{equation}

\par
	We next claim that
\begin{eqnarray}\label{yu-6-8-3}
	\|\nabla w(\cdot,t)\|_{L^2(B_R(x_0))}^2
	\leq \frac{4C_{1}^{2}\lambda T}{(T-\tau)^{2}}
	\sup_{s\in[0,T]}\|u(\cdot,s)\|^2_{L^2(B_R(x_0))}+\lambda^{2}\|\nabla u_{0}\|_{L^2(B_R(x_0))}^2.
\end{eqnarray}
    If it holds true, along  with the definition of $w$, we have that, for each $t\in[0,T]$,
\begin{eqnarray}\label{yu-6-8-4}
	\|\nabla v(\cdot,t)\|_{L^2(B_R(x_0))}^2&\leq& 2\|\nabla w(\cdot,t)\|^2_{L^2(B_R(x_0))}+2\|\nabla u(\cdot,t)\|_{L^2(B_R(x_0))}^2\nonumber\\
&\leq&2\lambda^{2}\|\nabla u_{0}\|_{L^2(B_R(x_0))}^2+2\|\nabla u(\cdot,t)\|_{L^2(B_R(x_0))}^2\nonumber\\
&&+ \frac{8C_{1}^{2}\lambda T}{(T-\tau)^{2}}
	\sup_{s\in[0,T]}\|u(\cdot,s)\|^2_{L^2(B_R(x_0))}\nonumber\\
	&\leq& \left[2+2\lambda^{2}+\frac{8C_{1}^{2}\lambda T}{(T-\tau)^{2}}\right]\sup_{s\in[0,T]}\|u(\cdot,s)\|^2_{H^1(B_R(x_0))}.
\end{eqnarray}
Hence,  the desired estimate \eqref{yu-6-18-1} follows from $\lambda\geq1$, (\ref{yu-6-7-11-1}) and (\ref{yu-6-8-4})
when $t\in[0,T]$ with $\hat{C}_1=2(3+\lambda^2)(1+2C_1)$.

The rest is devoted to prove the claim  (\ref{yu-6-8-3}). In fact,
multiplying first (\ref{yu-6-6-7}) by $w_s$ and then integrating by parts  over $B_R(x_0)\times(0,t)$,
	we find
\begin{eqnarray*}\label{yu-6-7-12}
	&\;&\int_0^t\int_{B_R(x_0)}|w_s|^2dxds
	+\frac{1}{2}\int_0^t\int_{B_R(x_0)}[\nabla w\cdot(A\nabla w)]_sdxds\nonumber\\
&=&\int_0^t\int_{B_R(x_0)}|w_s|^2dxds+\frac{1}{2}\int_0^t\int_{B_R(x_0)}\left[(\nabla w)_{s}\cdot (A\nabla w)+\nabla w\cdot(A\nabla w)_{s}\right]dxds\nonumber\\
&=&\int_0^t\int_{B_R(x_0)}|w_s|^2dxds+\int_0^t\int_{B_R(x_0)}\nabla w_{s}\cdot(A\nabla w) dxds\nonumber\\
&=&\int_0^t\int_{B_R(x_0)}|w_s|^2dxds-\int_0^t\int_{B_R(x_0)} w_{s}\mathrm{div}(A\nabla w) dxds\nonumber\\
    &=&-\int_0^t\int_{B_R(x_0)}w_s\eta_sudxds\nonumber\\
&\leq&\frac{C_{1}}{\epsilon_{1}(T-\tau)}\int_0^t\int_{B_R(x_0)}
	|w_{s}|^2dxds+\frac{C_{1}\epsilon_{1}}{T-\tau}\int_0^t\int_{B_R(x_0)}|u|^2dxds\qquad \text{for any}\;\;\epsilon_{1}>0.
\end{eqnarray*}
	Here, we used (\ref{yu-6-6-6}). Letting
	$\epsilon_{1}=2C_{1}/(T-\tau)$
	in the inequality above, combined with the uniform ellipticity condition \eqref{yu-11-28-2},  we get that
\begin{eqnarray*}\label{yu-6-8-1}
	&\;&\int_{B_R(x_0)}\nabla w(x,t)\cdot(A(x)\nabla w(x,t))dx\nonumber\\
&\leq&\int_{B_R(x_0)}\nabla u_{0}\cdot(A(x)\nabla u_{0})dx+\frac{4C_{1}^{2}}{(T-\tau)^{2}}\int_0^t\int_{B_R(x_0)}|u|^2dxds\nonumber\\
&\leq&	\lambda\int_{B_R(x_0)}|\nabla u_{0}|^{2}dx+\frac{4C_{1}^{2}}{(T-\tau)^{2}}\int_0^t\int_{B_R(x_0)}|u|^2dxds.
\end{eqnarray*}	
By the uniform ellipticity condition \eqref{yu-11-28-2} again, this means that
\begin{eqnarray*}\label{yu-6-8-2}
	\|\nabla w(\cdot,t)\|_{L^2(B_R(x_0))}^2&\leq&
	\lambda^{2}\int_{B_R(x_0)}|\nabla u_{0}|^{2}dx+\frac{4\lambda C_{1}^{2}}{(T-\tau)^{2}}\int_0^t\int_{B_R(x_0)}|u|^2dxds\\
&\leq& \lambda^{2}\int_{B_R(x_0)}|\nabla u_{0}|^{2}dx+\frac{4C_{1}^{2}\lambda T}{(T-\tau)^{2}}
	\sup_{s\in[0,T]}\|u(\cdot,s)\|^2_{L^2(B_R(x_0))}.
\end{eqnarray*}
Thus,  (\ref{yu-6-8-3}) is true.

\medskip

\vskip 5pt
 \textit{Step 2. To prove \eqref{yu-6-18-1} when $t\geq T$.}
\par

 Denoting $\mathcal{A}(\cdot):=-\mbox{div}(A(\cdot)\nabla)$ with domain $D(\mathcal{A}):=H_{0}^1(B_R(x_0))\cap H^2(B_R(x_0))$,
we claim that
	there is a generic constant $C_{2}=C_{2}(\lambda,N)>0$ such that
 \begin{equation}\label{yu-6-7-9}
 	\langle\mathcal{A}f,f\rangle_{{L}^2(B_R(x_0))}\geq C_{2}R^{-2}\|f\|^2_{L^2(B_R(x_0))}
	\;\;\mbox{for each}\;\;f\in D(\mathcal{A}).
 \end{equation}
 	In fact, by the Poincar\'e inequality
\begin{equation*}\label{yu-11-30-b-1}
    \int_{B_R(x_0)}|f|^2dx\leq \left(\frac{2R}{N}\right)^2\int_{B_R(x_0)}|\nabla f|^2dx\;\;\mbox{for each}\;\;f\in H_0^1(B_R(x_0)),
\end{equation*}	
we derive
\begin{equation*}\label{yu-10-12-1}
	\langle\mathcal{A}f,f\rangle_{{L}^2(B_R(x_0))}\geq\lambda^{-1}\int_{B_R(x_0)}
	|\nabla f|^2dx\geq \lambda^{-1}\left(\frac{N}{2R}\right)^{2}
	\int_{B_R(x_0)}
	|f|^2dx.
\end{equation*}
 	We can conclude the claim (\ref{yu-6-7-9}).
\par

    As a consequence of \eqref{yu-6-7-9}, we see that the inverse of $(\mathcal A, D(\mathcal{A}))$ is positive, self-adjoint and compact in $L^2(B_R(x_0))$.
By the spectral theorem for compact self-adjoint operators,	there are  eigenvalues
	$\{\mu_i\}_{i\in\mathbb{N}^+}\subset \mathbb{R}^+$ and eigenfunctions
	$\{f_i\}_{i\in\mathbb{N}^+}\subset H_0^1(B_R(x_0))$, which make up a complete orthogonal basis of $L^2(B_R(x_0))$,
such that
 \begin{equation}\label{yu-6-7-10}
 \begin{cases}
 	-\mathcal{A}f_i=\mu_if_i\;\;\mbox{and}\;\;\|f_i\|_{{L}^2(B_R(x_0))}=1&\mbox{for each}\;\;i\in\mathbb{N}^+,\\
C_{2}R^{-2}\leq \mu_1\leq \mu_2\leq \cdots
         \leq \mu_i\to+\infty&\mbox{as}\;\;i\to+\infty.
 \end{cases}
 \end{equation}
 Then, by the formula of Fourier decomposition,
	the solution $w$  of (\ref{yu-6-6-7}) in $[T,+\infty)$ is given by
\begin{equation*}\label{yu-6-12-3}
	w(\cdot,t)=\sum_{i=1}^\infty\langle w(\cdot,T),f_i\rangle_{{L}^2(B_R(x_0))}e^{-\mu_i(t-T)}f_i	\quad\text{in}\;\;B_R(x_0)\;\;\mbox{for each}\;\;t\in[T,+\infty).
\end{equation*}
    	Hence, we deduce that for each $t\in[T,+\infty)$,
\begin{eqnarray}\label{yu-6-12-4}
	\|w(\cdot,t)\|^2_{{L}^2(B_R(x_0))}
	\leq e^{-C_{2}R^{-2}(t-T)}\|w(\cdot,T)\|_{{L}^2(B_R(x_0))}^2
\end{eqnarray}
   and
\begin{eqnarray*}
	w_t(\cdot,t)=-\sum_{i=1}^\infty\mu_i\langle w(\cdot,T),f_i\rangle_{{L}^2(B_R(x_0))}e^{-\mu_i(t-T)}f_i.
\end{eqnarray*}
        It follows that
\begin{eqnarray}\label{yu-6-13-1}
	-\langle w(\cdot,t),w_t(\cdot,t)\rangle_{{L}^2(B_R(x_0))}
	=\sum_{i=1}^\infty\mu_i|\langle w(\cdot,T),f_i\rangle_{{L}^2(B_R(x_0))}|^2
	e^{-2\mu_i(t-T)},
\end{eqnarray}
for each $t\in[T,+\infty)$. In particular, by taking $t=T$ in the above identify, we have
\begin{equation}\label{yu-6-13-2}
	-\langle w(\cdot,T),w_t(\cdot,T)\rangle_{{L}^2(B_R(x_0))}
	=\sum_{i=1}^\infty\mu_i|\langle w(\cdot,T),f_i\rangle_{{L}^2(B_R(x_0))}|^2.	
\end{equation}
    Meanwhile, it follows from (\ref{yu-6-6-7}), (\ref{yu-6-6-6}) and \eqref{yu-11-28-2} that
\begin{eqnarray}\label{yu-6-13-3}
-\langle w(\cdot,T),w_t(\cdot,T)\rangle_{{L}^2(B_R(x_0))}\leq \lambda\|\nabla w(\cdot,T)\|^2_{L^2(B_R(x_0))}.
\end{eqnarray}
    	From (\ref{yu-6-13-2}) and (\ref{yu-6-13-3}), we have
\begin{equation*}\label{yu-6-14-1}
	\sum_{i=1}^\infty\mu_i|\langle w(\cdot,T),f_i\rangle_{{L}^2(B_R(x_0))}|^2
	\leq \lambda\|\nabla w(\cdot,T)\|^2_{L^2(B_R(x_0))}.
\end{equation*}
    	This, together with  (\ref{yu-6-13-1}) and (\ref{yu-6-7-10}), gives
\begin{equation}\label{yu-6-14-2}
	-\langle w(\cdot,t),w_t(\cdot,t)\rangle_{{L}^2(B_R(x_0))}
	\leq \lambda e^{-2C_{2}R^{-2}(t-T)}\|\nabla w(\cdot,T)\|^2_{L^2(B_R(x_0))},
\end{equation}
	for each $t\in[T,+\infty)$. On the other hand, by (\ref{yu-6-6-6}), (\ref{yu-6-6-7}) and \eqref{yu-11-28-2}, we see that for each $t\in[T,+\infty)$,
 \begin{equation}\label{yu-6-14-3}
 	-\langle w(\cdot,t),w_t(\cdot,t)\rangle_{{L}^2(B_R(x_0))}=\langle w(\cdot,t),\mathcal{A}w(\cdot,t)\rangle_{{L}^2(B_R(x_0))}
	\geq \lambda^{-1}\|\nabla w(\cdot,t)\|^2_{L^2(B_R(x_0))}.
 \end{equation}
    	By (\ref{yu-6-14-2}) and (\ref{yu-6-14-3}), we find that for each $t\in[T,\infty)$,
 \begin{equation*}\label{yu-6-14-4}
 	\|\nabla w(\cdot,t)\|^2_{L^2(B_R(x_0))}
	\leq \lambda^{2}
	e^{-2C_{2}R^{-2}(t-T)}\|\nabla w(\cdot,T)\|^2_{L^2(B_R(x_0))}.
\end{equation*}
    	This, together with (\ref{yu-6-12-4}) and $\lambda\geq1$, means that
\begin{equation*}\label{yu-6-18-2}
	\|w(\cdot,t)\|_{H^1(B_R(x_0))}^2\leq \lambda^{2}e^{-C_{2}R^{-2}(t-T)}\|w(\cdot,T)\|^2_{H^1(B_R(x_0))}.
\end{equation*}
By (\ref{yu-6-6-6}), we have  that  $w(\cdot,t)=v(\cdot,t)$ for each $t\geq T$. Thus, by (\ref{yu-6-18-1}) for the case $t\in[0,T]$, we conclude the desired result with
$\hat{C}_1=2\lambda^2(3+\lambda^2)(1+2C_1)$ and $\hat{C}_2=C_{2}/2$.
\end{proof}

Define
\begin{equation*}\label{yu-6-18-5}
	\tilde{v}(\cdot,t):=
\begin{cases}
	v(\cdot,t)&\mbox{if}\;\;t\geq 0,\\
	0&\mbox{if}\;\;t<0,
\end{cases}
\end{equation*}
where $v$ is the solution of \eqref{yu-11-29-4}.
By Lemma \ref{yu-lemma-6-10-1}, we see the Fourier transform of	$\tilde{v}$ with respect to the time variable $t\in\mathbb R$ is meaningful
\begin{equation*}\label{yu-6-18-6}
	\hat{v}(x,\mu)=\int_{\mathbb{R}}e^{-i\mu t}\tilde{v}(x,t)dt\quad\text{for}\;\;(x,\mu)\in B_R(x_0)\times\mathbb R.
\end{equation*}
We further have
\begin{lemma}\label{yu-lemma-6-18-1}
There exist positive constants $\hat{C}_{3}=\hat{C}_{3}(\lambda,N)$ and $\hat{C}_{4}=\hat{C}_{4}(\lambda,N)$ such that  for each  $\mu\in\mathbb{R}$,
the following two estimates hold:
\begin{equation}\label{yu-6-23-5}
	\|\nabla \hat{v}(\cdot,\mu)\|^{2}_{L^2(B_r(x_0))}\leq \frac{\hat{C}_{3}}{(R-2r)^{2}}\|\hat{v}(\cdot,\mu)\|^{2}_{L^2(B_{\frac{R}{2}}(x_0))}\quad \text{for all} \;\;0<r <\frac{R}{2}
\end{equation}
and
\begin{equation}\label{yu-6-22-16}
	\|\hat{v}(\cdot,\mu)\|_{L^2(B_{\frac{R}{2}}(x_0))}\leq \hat{C}_{4}
	\left(T+R^{2}\right)e^{\frac{\hat{C}_{1}T}{T-\tau}-\frac{\sqrt{|\mu|}R}{4e\hat{C}_{5}}}
	F(R),
\end{equation}
where $\hat{C}_{5}=\sqrt{8\pi^2\sqrt{2+\lambda^{6}}}$, $\hat{C}_1>0$ and $F(R)$ are given by Lemma \ref{yu-lemma-6-10-1}.
\end{lemma}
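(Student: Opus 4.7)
Plan.

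\textbf{Step 1 (elliptic equation for $\hat v$).} Because $v(\cdot,0)=0$, the zero-extension $\tilde v$ is continuous at $t=0$, so $\tilde v$ still satisfies $\partial_t \tilde v - \mathrm{div}(A\nabla \tilde v)=0$ distributionally on $B_R(x_0)\times\mathbb R$. Lemma~\ref{yu-lemma-6-10-1} ensures $\tilde v(x,\cdot)\in L^1(\mathbb R)\cap L^2(\mathbb R)$ with values in $H^1(B_R(x_0))$, so the time-Fourier transform $\hat v$ is well defined. Transforming the PDE gives the elliptic equation
\[
-\mathrm{div}(A(x)\nabla \hat v(x,\mu)) + i\mu\,\hat v(x,\mu)=0\quad\text{in }B_R(x_0),
\]
with boundary trace $\hat v|_{\partial B_R(x_0)}=\widehat{\eta u}(\cdot,\mu)$. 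Both \eqref{yu-6-23-5} and \eqref{yu-6-22-16} are extracted from this equation.

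\textbf{Step 2 (Caccioppoli for \eqref{yu-6-23-5}).} Fix $0<r<R/2$ and take a cutoff $\phi\in C_c^\infty(B_{R/2}(x_0))$ with $\phi\equiv 1$ on $B_r(x_0)$ and $|\nabla\phi|\leq 4/(R-2r)$. Test the elliptic equation against $\overline{\hat v}\,\phi^2$ and integrate by parts on $B_R(x_0)$. Taking real parts annihilates the $i\mu|\hat v|^2\phi^2$ term and leaves
\[
\int \phi^2 A\nabla\hat v\cdot\nabla\overline{\hat v}\,dx = -2\,\mathrm{Re}\!\int \phi\,\overline{\hat v}\,A\nabla\hat v\cdot\nabla\phi\,dx.
\]
The uniform ellipticity \eqref{yu-11-28-2} and Young's inequality absorb the gradient into the left-hand side, producing \eqref{yu-6-23-5} with $\hat C_3$ depending only on $\lambda$.

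\textbf{Step 3 (Agmon-type weighted estimate for \eqref{yu-6-22-16}).} To extract the $\sqrt{|\mu|}$ exponential decay, I use the radial weight $\psi(x)=R-|x-x_0|\geq 0$ on $B_R(x_0)$, so that $|\nabla\psi|=1$ a.e., $\psi=0$ on $\partial B_R(x_0)$, and $\psi\geq R/2$ on $B_{R/2}(x_0)$. Test the elliptic equation against $e^{2\alpha\psi}\overline{\hat v}$ for a parameter $\alpha=c_\lambda\sqrt{|\mu|}$, and take the combination $\mathrm{Re}+\mathrm{sgn}(\mu)\,\mathrm{Im}$ of the resulting identity. The cross-term $2\alpha A\nabla\hat v\cdot\nabla\psi\,\overline{\hat v}$ is absorbed into the principal part via Young's inequality in the sharp form $2\sqrt 2\,\alpha\lambda|\nabla\hat v||\hat v|\leq (2\lambda)^{-1}|\nabla\hat v|^2+4\lambda^3\alpha^2|\hat v|^2$, and choosing $\alpha^2=|\mu|/(8\lambda^3)$ yields
\[
\frac{1}{2\lambda}\!\int e^{2\alpha\psi}|\nabla\hat v|^2\,dx+\frac{|\mu|}{2}\!\int e^{2\alpha\psi}|\hat v|^2\,dx\leq \bigl|\mathrm{boundary\ terms\ on\ }\partial B_R(x_0)\bigr|.
\]
Restricting to $B_{R/2}(x_0)$ on the left gives the amplification factor $e^{\alpha R}$, which after dividing is the source of the $e^{-\sqrt{|\mu|}R/(4e\hat C_5)}$ decay. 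The boundary contribution splits into $\|\widehat{\eta u}\|_{L^2(\partial B_R(x_0))}$ (bounded by Cauchy--Schwarz in $t$ and a trace estimate by $\sqrt T\,F(R)$) and $\|\partial_\nu \hat v\|_{L^2(\partial B_R(x_0))}$ (handled by \eqref{yu-6-23-5} on a slightly enlarged ball combined with Lemma~\ref{yu-lemma-6-10-1}). The polynomial prefactor $T+R^2$ of \eqref{yu-6-22-16} collects $\sqrt T$ from the trace and $R^2/|\mu|$ from the energy inversion in the low-frequency regime $|\mu|\lesssim R^{-2}$.

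\textbf{Main obstacle.} The delicate part is sharp tracking of constants: the explicit $\hat C_5=\sqrt{8\pi^2\sqrt{2+\lambda^6}}$ and the $4e$ factor in the denominator of the exponent indicate that the Young constants in Step~3 and the geometry of the radial weight must be balanced precisely, with the $e$ arising from an optimization in $\alpha$ of a Stirling-type quantity. A secondary technical difficulty is controlling $\partial_\nu\hat v$ on $\partial B_R(x_0)$ without forfeiting the $\sqrt{|\mu|}$ rate; this is accomplished by running the weighted estimate on a slightly inside ball $B_{R-\delta}(x_0)$, using \eqref{yu-6-23-5} on the thin annulus, and optimizing $\delta$ so that the annular loss is dominated by the exponential gain from the weight.
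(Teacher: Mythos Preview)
Your Steps~1 and~2 match the paper exactly: the Fourier transform in time yields $i\mu\hat v-\mathrm{div}(A\nabla\hat v)=0$ on $B_R(x_0)$, and \eqref{yu-6-23-5} is the standard Caccioppoli estimate obtained by testing against $\bar{\hat v}\psi^2$ with a radial cutoff and taking the real part.

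Your Step~3, however, is a \emph{different route} from the paper, and as written it has a gap. The paper does not use an Agmon weight. Instead it lifts to one extra variable by setting $p(x,\xi,\mu)=e^{i\sqrt{|\mu|}\xi}\hat v(x,\mu)$, which solves the elliptic equation $\mathrm{div}(A\nabla p)+i\,\mathrm{sign}(\mu)\,\partial_{\xi\xi}p=0$ on $B_R(x_0)\times\mathbb R$. Iterated \emph{interior} Caccioppoli inequalities in the $\xi$-direction give $\|\partial_\xi^m p\|_{L^2(B_{R/2}\times(-R/2,R/2))}^2\le 2R\,(C_4m^2/R^2)^m\|\hat v\|_{L^2(B_R)}^2$ with $C_4=8\pi^2\sqrt{2+\lambda^6}$. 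Pairing against an arbitrary $\varphi\in L^2(B_{R/2})$ and using a one-dimensional Sobolev embedding, the scalar function $P_\mu(\xi)=\int p(x,\xi,\mu)\bar\varphi(x)\,dx$ is shown to extend analytically to the complex disc of radius $R/(2\sqrt{C_4}e)$ (the $e$ is exactly the Stirling factor in $m^m/m!$). Evaluating at $\xi_0=-iR/(4\sqrt{C_4}e)$ produces $P_\mu(\xi_0)=e^{\sqrt{|\mu|}R/(4e\hat C_5)}\int\hat v\,\bar\varphi$, and this yields \eqref{yu-6-22-16}. The point is that all estimates are interior in $x$, so no boundary normal derivative ever appears.

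Your Agmon scheme is plausible in spirit, but the boundary treatment you describe does not work. Testing against $e^{2\alpha\psi}\bar{\hat v}$ on $B_R$ produces the boundary integral $\int_{\partial B_R}\partial_\nu^A\hat v\cdot\overline{\hat v}$, and you cannot control $\partial_\nu^A\hat v$ on $\partial B_R$ via \eqref{yu-6-23-5}: that is an \emph{interior} gradient bound on $B_r$ with $r<R/2$, not a trace estimate at the actual boundary. Moving to $B_{R-\delta}$ does not help as stated, since now neither the Dirichlet trace nor the normal derivative on $\partial B_{R-\delta}$ is prescribed. The viable fix is to multiply by a spatial cutoff $\chi\in C_c^\infty(B_R)$ with $\chi\equiv 1$ on $B_{R-\delta}$ and $|\nabla\chi|\lesssim\delta^{-1}$; then the commutator term is supported in the annulus $B_R\setminus B_{R-\delta}$ where $e^{2\alpha\psi}\le e^{2\alpha\delta}$, and it is controlled by the $\mu$-uniform bound $\|\hat v(\cdot,\mu)\|_{H^1(B_R)}\le C(T+R^2)e^{\hat C_1 T/(T-\tau)}F(R)$ coming directly from Lemma~\ref{yu-lemma-6-10-1} after integrating in $t$. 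Choosing $\delta$ a fixed fraction of $R$ then gives the exponential decay $e^{-c(\lambda)\sqrt{|\mu|}R}$. Note, though, that this produces a constant of the form $c(\lambda)=1/(c'\sqrt{\lambda^3})$, not the stated $1/(4e\hat C_5)$; your remark that the $4e$ and the $\pi^2\sqrt{2+\lambda^6}$ arise from a Stirling optimization is correct only for the paper's analytic-continuation argument and has no counterpart in the Agmon computation.
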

\begin{proof}
By \eqref{yu-11-29-4}, we have that for each $\mu\in\mathbb R$,
\begin{equation}\label{yu-6-18-7}
	i\mu\hat{v}(x,\mu)-\mbox{div}(A(x)\nabla \hat{v}(x,\mu))=0
	\;\;\mbox{in}\;\;B_{R}(x_0).
\end{equation}
Take arbitrarily $r\in(0,R/2)$ and define a cutoff function $\psi \in C^\infty(\mathbb{R}^N;[0,1])$ verifying
\begin{equation}\label{yu-6-23-1}
\begin{cases}
	\psi=1&\mbox{in}\;\;\overline{B_r(x_0)},\\
	\psi=0&\mbox{in}\;\;\mathbb{R}^N\backslash B_{\frac{R}{2}}(x_0),\\
	|\nabla\psi|\leq \frac{C_{3}}{R-2r}&\mbox{in}\;\;\mathbb{R}^N,
\end{cases}
\end{equation}
where $C_{3}=C_{3}(N)$ is a positive  constant. The rest proof is divided into two steps.
\vskip 5pt
    \textit{Step 1. The proof of (\ref{yu-6-23-5}).}
\par

Multiplying first (\ref{yu-6-18-7}) by $\bar{\hat{v}}\psi^2$ and then integrating by parts  over
	$B_{\frac{R}{2}}(x_0)$, we have
\begin{eqnarray*}\label{yu-6-23-2}
        \int_{B_{\frac{R}{2}}(x_0)}\nabla\bar{\hat{v}}\cdot (A\nabla\hat{v})\psi^2dx
	+2\int_{B_{\frac{R}{2}}(x_0)}\nabla\psi\cdot(A\nabla\hat{v})\bar{\hat{v}}\psi dx
	=-i\int_{B_{\frac{R}{2}}(x_0)}\mu |\hat{v}|^2\psi^2dx.
\end{eqnarray*}
By (\ref{yu-11-28-2}) and the Young inequality, we derive that for each $\epsilon_2>0$,
\begin{eqnarray}\label{yu-6-23-3}
	&\;&\lambda^{-1}\int_{B_{\frac{R}{2}}(x_0)}|\nabla\hat{v}|^2\psi^2dx
=\lambda^{-1}\mbox{Re}\int_{B_{\frac{R}{2}}(x_0)}|\nabla\hat{v}|^2\psi^2dx\nonumber\\
	&\leq&-2\mbox{Re}
	\int_{B_{\frac{R}{2}}(x_0)}\nabla\psi\cdot(A\nabla\hat{v})\bar{\hat{v}}\psi dx
\leq 2\lambda
	\int_{B_{\frac{R}{2}}(x_0)}|\nabla\psi||\nabla\hat{v}||\bar{\hat{v}}||\psi| dx\nonumber\\
	&\leq&\epsilon_2\int_{B_{\frac{R}{2}}(x_0)}|\nabla \hat{v}|^2\psi^2dx
	+\frac{\lambda^2}{\epsilon_2}\int_{B_{\frac{R}{2}}(x_0)}|\hat{v}|^2|\nabla\psi|^2dx.	
\end{eqnarray}
Taking $\epsilon_2=1/(2\lambda)$  in the above inequality, by (\ref{yu-6-23-1}) and (\ref{yu-6-23-3}), we derive
 (\ref{yu-6-23-5}).

\vskip 5pt
    \textit{Step 2. The proof of (\ref{yu-6-22-16}).}
\par

Note that when $\mu=0$, by (\ref{yu-6-7-11-1}), (\ref{yu-6-12-4}) and the definitions of $\hat{v}, \tilde{v}, w$,  we have
\begin{eqnarray}\label{yu-6-22-15}
&&\|\hat{v}(\cdot,0)\|_{L^2(B_{R}(x_0))}\nonumber\\
&=&
\left\|\int_{\mathbb{R}}\tilde{v}(x,t)dt\right\|_{L^2(B_{R}(x_0))}
=\left\|\int_{0}^{+\infty}v(x,t)dt\right\|_{L^2(B_{R}(x_0))}\nonumber\\
	&\leq&\int_{0}^{T}\|v(x,t)\|_{L^2(B_{R}(x_0))}dt+\int_{T}^{+\infty}\|v(x,t)\|_{L^2(B_{R}(x_0))}dt\nonumber\\
&\leq&2Te^{\frac{C_{1}T}{T-\tau}}
	\sup_{s\in[0,T]}\|u(\cdot,s)\|_{L^2(B_R(x_0))}+\int_{T}^{+\infty}e^{-\frac{C_{2}(t-T)}{2R^2}}\|v(T)\|_{L^2(B_{R}(x_0))}dt\nonumber\\
	&\leq&2e^{\frac{C_{1}T}{T-\tau}}
	\sup_{s\in[0,T]}\|u(\cdot,s)\|_{H^1(B_R(x_0))}\left[T+\int_{T}^{+\infty}e^{-\frac{C_{2}(t-T)}{2R^2}}dt\right]\nonumber\\
&\leq&2e^{\frac{C_{1}T}{T-\tau}}
	\left[T+\frac{2R^{2}}{C_{2}}\right]\sup_{s\in[0,T]}\|u(\cdot,s)\|_{H^1(B_R(x_0))}.
\end{eqnarray}
    This yields that \eqref{yu-6-22-16} is true when $\mu=0$. Thus it suffices to prove \eqref{yu-6-22-16} in the case that $\mu\neq0$.
To this end,  define for each $\mu\in\mathbb{R}\setminus\{0\}$,
\begin{equation*}\label{yu-6-19-1}
	p(x,\xi,\mu)=e^{i\sqrt{|\mu|}\xi}\hat{v}(x,\mu) \;\; \mbox{for a.e.}\;\; (x,\xi)\in B_R(x_0)\times\mathbb R.
\end{equation*}
Then,  by (\ref{yu-6-18-7}), one can  check easily that, for each fixed $\mu\in\mathbb{R}\setminus\{0\}$, $p(\cdot,\cdot,\mu)$ verifies
\begin{equation*}\label{yu-6-19-2}
	\mbox{div}(A(x)\nabla p(x,\xi,\mu))+i\mbox{sign}(\mu)\partial_{\xi\xi}p(x,\xi,\mu)
	=0\;\;\mbox{in}\;\;B_R(x_0)\times\mathbb{R}.
\end{equation*}
Here
\begin{equation*}\label{yu-6-19-3}
     \mbox{sign}(\mu)=
\begin{cases}
	1&\mbox{if}\;\;\mu>0,\\
	-1&\mbox{if}\;\;\mu<0.
\end{cases}
\end{equation*}

As for the equation verified by $p(\cdot,\cdot,\mu)$, it is elliptic with complex coefficients and its coefficients are independent of the $\xi$-variable. These facts imply by standard energy methods  ($m$ times localized Cacciopoli’s inequalities, see also \cite{Canuto-Rosset-Vessella}) that
\begin{equation}\label{yu-6-22-4}
	\int_{B_{\frac{R}{2}}(x_0)\times(-\frac{R}{2},\frac{R}{2})}|p^{(m)} |^2dxd\xi
	\leq2R\left[\frac{C_{4}m^2}{R^2}\right]^m\int_{B_R(x_0)}|\hat{v}(x,\mu)|^2dx
\end{equation}
with $p^{(m)}=\partial_\xi^m p$ and $C_{4}=8\pi^2\sqrt{2+\lambda^{6}}$ (For the sake of completion, a detail proof for \eqref{yu-6-22-4} is presented in Appendix).
By the similar  arguments in (\ref{yu-6-22-15}) and the definition of $\hat{v}$, we get that  for each $\mu\in\mathbb{R}$,
\begin{eqnarray}\label{yu-6-22-5}
	\|\hat{v}(\cdot,\mu)\|_{L^2(B_{R}(x_0))}
&\leq&\int_{0}^{+\infty}\|v(x,t)\|_{L^2(B_{R}(x_0))}dt\nonumber\\
	&\leq&2e^{\frac{C_{1}T}{T-\tau}}
	\left[T+\frac{2R^{2}}{C_{2}}\right]\sup_{s\in[0,T]}\|u(\cdot,s)\|_{H^1(B_R(x_0))}.
\end{eqnarray}
	Therefore, by (\ref{yu-6-22-4}), (\ref{yu-6-22-5}) and the definition of $F(R)$, we get that for each $m\in\mathbb{N}^+$,
\begin{eqnarray}\label{yu-6-22-6}
	\int_{B_{\frac{R}{2}}(x_0)\times(-\frac{R}{2},\frac{R}{2})}|p^{(m)}|^2dxd\xi
	\leq8R \left[\frac{C_{4}m^2}{R^2}\right]^m e^{\frac{2C_{1}T}{T-\tau}}
	\left[T+\frac{2R^{2}}{C_{2}}\right]^{2}
	F^2(R).
\end{eqnarray}
\par
	For any $\varphi\in L^2(B_{R/2}(x_0);\mathbb{C})$, we define
\begin{equation*}\label{yu-6-22-7}
	P_{\mu}(\xi):=\int_{B_{\frac{R}{2}}(x_0)}p(x,\xi,\mu)\bar{\varphi}(x)dx, \;\;\;\xi\in\left(-\frac{R}{2},\frac{R}{2}\right).
\end{equation*}
	It is well known that the following interpolation inequality holds
\begin{equation}\label{yu-6-22-8}
	\|f\|_{L^\infty(I)}\leq C_{5}\left(|I|\|f'\|_{L^2(I)}^2+\frac{1}{|I|}\|f\|_{L^2(I)}^2\right)^{\frac{1}{2}}
	\;\;\mbox{for each}\;\;f\in H^1(I),
\end{equation}
	where  $C_{5}>0$, $I$ is an bounded nonempty interval of $\mathbb{R}$ and $|I|$ is the length. Therefore, by
	(\ref{yu-6-22-6}) and (\ref{yu-6-22-8}),  we have that for any $\xi\in(-R/2, R/2)$ and $m\in\mathbb{N}^+$,
\begin{eqnarray}\label{yu-6-22-9}
	&&|P_{\mu}^{(m)}(\xi)|\nonumber\\
&\leq& C_{5}\left(R\int_{-\frac{R}{2}}^{\frac{R}{2}}|P_{\mu}^{(m+1)}(\xi)|^2d\xi
	+\frac{1}{R}\int_{-\frac{R}{2}}^{\frac{R}{2}}|P_{\mu}^{(m)}(\xi)|^2d\xi\right)^{\frac{1}{2}}\nonumber\\
&=&C_{5}\left(R\int_{-\frac{R}{2}}^{\frac{R}{2}}\left|\int_{B_{\frac{R}{2}}(x_0)}p^{(m+1)}(x,\xi,\mu)\bar{\varphi}(x)dx\right|^2d\xi
	+\frac{1}{R}\int_{-\frac{R}{2}}^{\frac{R}{2}}\left|\int_{B_{\frac{R}{2}}(x_0)}p^{(m)}(x,\xi,\mu)\bar{\varphi}(x)dx\right|^2d\xi\right)^{\frac{1}{2}}\nonumber\\
	&\leq&C_{5}\left(R\int_{B_{\frac{R}{2}}(x_0)\times(-\frac{R}{2},\frac{R}{2})}
	|p^{(m+1)}|^2dxd\xi+\frac{1}{R}\int_{B_{\frac{R}{2}}(x_0)\times(-\frac{R}{2},\frac{R}{2})}|p^{(m)}|^2dxd\xi\right)
	^{\frac{1}{2}}\|\varphi\|_{L^2(B_{\frac{R}{2}}(x_0))}\nonumber\\
	&\leq&4C_{5}e^{\frac{C_{1}T}{T-\tau}}
	\left[T+\frac{2R^{2}}{C_{2}}\right] F(R)\frac{C_{4}^{\frac{m+1}{2}}(m+1)^{m+1}}{R^{m}}\|\varphi\|_{L^2(B_{\frac{R}{2}}(x_0))}.
\end{eqnarray}
	This implies that  $P_{\mu}(\cdot)$ can be
	analytically extended to the complex ball (still denoted by the same notation)
\begin{equation*}\label{yu-6-22-10}
	E_0:=\left\{\xi\in\mathbb{C}:|\xi|<\frac{R}{2\sqrt{C_{4}}e} \right\}.
\end{equation*}
Then,
\begin{equation*}\label{yu-6-22-11}
	|P_{\mu}(\xi)|\leq \sum_{m=0}^\infty\frac{|P_{\mu}^{(m)}(0)|}{m!}|\xi|^m,
\end{equation*}
when $\xi\in i\mathbb{R}\cap E_0$.
Taking $\xi_0=-iR/(4\sqrt{C_{4}}e)$, by (\ref{yu-6-22-9}), we  get that
\begin{equation}\label{yu-6-22-12}
	|P_{\mu}(\xi_0)|\leq 4C_{5}\sqrt{C_{4}}e^{\frac{C_{1}T}{T-\tau}}
	\left[T+\frac{2R^{2}}{C_{2}}\right]
	F(R)\sum_{m=0}^\infty\frac{(m+1)^{m+1}}{m!(4e)^m}
	\|\varphi\|_{L^2(B_{\frac{R}{2}}(x_0))}.
\end{equation}
While, by the definitions of $P_{\mu}(\cdot)$ and $\xi_0$,
\begin{eqnarray*}\label{yu-6-22-13}
	P_{\mu}(\xi_0)=e^{\frac{\sqrt{|\mu|}R}{4\sqrt{C_{4}}e}}\int_{B_{\frac{R}{2}}(x_0)}\hat{v}(x,\mu)\bar{\varphi}(x)dx.
\end{eqnarray*}
	This, together with (\ref{yu-6-22-12}) and the arbitrariness of $\varphi$, means that,
	\begin{equation}\label{yu-6-22-14}
	\|\hat{v}(\cdot,\mu)\|_{L^2(B_{\frac{R}{2}}(x_0))}\leq  4C_{5}\sqrt{C_{4}}
	\left[T+\frac{2R^{2}}{C_{2}}\right]e^{\frac{C_{1}T}{T-\tau}-\frac{\sqrt{|\mu|}R}{4\sqrt{C_{4}}e}}
\sum_{m=0}^\infty\frac{(m+1)^{m+1}}{m!(4e)^m}
	F(R).
\end{equation}
		By (\ref{yu-6-22-14}) and (\ref{yu-6-22-15}), we derive (\ref{yu-6-22-16}) and complete the proof.	
\end{proof}

\medskip

\subsection{Stability estimate for elliptic equations}\label{yu-section-7-26-3}

For our purpose and the convenience of the reader, we first quote from \cite[Theorem 5.1]{Logunov-Malinnikova} the following propagation estimate of smallness (for sets on hyperplanes) for gradients of solutions to elliptic equations in the way we understood.

\begin{lemma}\label{auxiliary-lemma}
Let $0<r<r'<R<+\infty$ and $\rho>0.$ Suppose a positive measurable set $\omega\subset B_{r/2}(x_0)$  satisfy $|\omega|/|B_{r}(x_0)|>\rho$. If $g$ is a solution to $\mathrm{div}(A(x)\nabla g)+g_{x_{N+1}x_{N+1}}=0$ in $B_R(x_0,0),$ with $A(\cdot)$ satisfying the condition (\ref{yu-11-28-2}), then there exists $\hat{C}_{6}=\hat{C}_{6}(r,r'\rho,\lambda)>0$ and  $\alpha_{1}=\alpha_{1}(r,r'\rho,\lambda)\in(0,1)$  such that

{\begin{equation}\label{auxiliary-lemma11}
	\sup_{B_{r}(x_0,0)}|(\nabla g,\partial_{N+1}g)|\leq \hat{C}_{6}\left(\sup_{\omega}| (\nabla g(\cdot,0),\partial_{N+1}g(\cdot,0))|\right)^{\alpha_{1}}\left(\sup_{B_{r'}(x_0,0)}|(\nabla g,\partial_{N+1}g)|\right)^{1-\alpha_{1}}.
\end{equation}}
\end{lemma}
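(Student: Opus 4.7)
The plan is to establish the estimate \eqref{auxiliary-lemma11} by combining a three-ball (Hadamard-type) inequality for the full gradient with a covering argument that upgrades smallness on the hyperplane $\{x_{N+1}=0\}$ to smallness on a small $(N+1)$-dimensional ball. First I would rewrite the equation $\mathrm{div}(A(x)\nabla g) + g_{x_{N+1}x_{N+1}}=0$ as a uniformly elliptic divergence-form equation in $\mathbb{R}^{N+1}$ with block-diagonal coefficient matrix $\tilde{A}(x)=\mathrm{diag}(A(x),1)$, so that $g$ is $H^1$-regular on $B_R(x_0,0)$ and standard elliptic estimates (Caccioppoli, Moser, De Giorgi--Nash) are available uniformly in $\lambda$.

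The central tool is a three-ball inequality for the gradient vector $G:=(\nabla g,\partial_{N+1}g)$. Although $G$ itself is not a solution of an elliptic equation, one can show that $|G|^{2}$ satisfies a useful differential inequality, and via an Almgren-type frequency function (or a Carleman estimate adapted to $\tilde{A}$) one obtains
\[
\sup_{B_\rho(y,0)}|G|\leq C\left(\sup_{B_{\rho/2}(y,0)}|G|\right)^{\beta}\left(\sup_{B_{2\rho}(y,0)}|G|\right)^{1-\beta}
\]
for every $y$ with $B_{2\rho}(y,0)\subset B_{r'}(x_0,0)$, with uniform $\beta\in(0,1)$ and $C>0$ depending only on $\lambda$. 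A finite chain of overlapping balls then propagates smallness from any prescribed small ball to all of $B_r(x_0,0)$, losing only a finite power in the exponent.

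The heart of the argument is the reduction from the measurable hyperplane set $\omega$ to an actual ball. Since $|\omega|\geq\rho|B_r(x_0)|$, I would cover $\omega$ by a Vitali family of $N$-dimensional disks on which the doubling exponent of $G$ is controlled, then extract a single disk $D_\ast$ together with a slightly larger $(N+1)$-dimensional ball $B_\delta(y_\ast,0)\subset B_{r'}(x_0,0)$ so that a Remez-type estimate gives $\sup_{B_\delta(y_\ast,0)}|G|\leq C(\sup_{\omega}|G|)^{\alpha_{0}}(\sup_{B_{r'}(x_0,0)}|G|)^{1-\alpha_{0}}$ for some $\alpha_{0}\in(0,1)$. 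This is precisely the Logunov--Malinnikova propagation-of-smallness principle for gradients of solutions through sets of positive $N$-dimensional Hausdorff measure, which exploits the almost-monotonicity of the frequency function along dyadic scales combined with a stopping-time argument.

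The main obstacle, and the deepest part, is exactly this last step: the gradient of a uniformly elliptic solution is not itself a solution, so classical quantitative unique continuation through lower-dimensional sets does not apply directly. Overcoming this is the essence of the Logunov--Malinnikova machinery; an alternative route is to lift $G$ to a scalar solution of an auxiliary elliptic system by an appropriate reflection/extension, after which one can invoke standard propagation of smallness on the enlarged domain. Once the small-ball bound near $y_\ast$ is secured, composing it with the iterated three-ball chain yields \eqref{auxiliary-lemma11} with an exponent $\alpha_{1}=\alpha_{0}\beta^{k}\in(0,1)$ and constant $\hat{C}_{6}$ depending only on $r,r',\rho,\lambda$.
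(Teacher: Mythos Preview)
The paper does not prove this lemma at all: it is quoted verbatim from \cite[Theorem 5.1]{Logunov-Malinnikova}, as the sentence introducing the lemma makes explicit. Your proposal correctly identifies the statement as the Logunov--Malinnikova propagation-of-smallness result for gradients through sets of positive measure on a hyperplane, and your sketch of the underlying mechanism (three-ball inequality plus the deep step of upgrading smallness on a lower-dimensional measurable set to a full ball) is a fair high-level summary of that machinery; but since the paper treats the result as a black box, there is nothing to compare beyond noting that you and the paper invoke the same external theorem.
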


\begin{remark}
We refer the interesting reader to \cite{Logunov-Malinnikova}  for more general statements on propagation of
smallness on possibly lower dimensional subsets for solutions to elliptic equations.
\end{remark}

\begin{remark}
Regarding the dependence of $\omega$, we here emphasize that the constants $\hat{C}_{6}$ and $\alpha_{1}$ in (\ref{auxiliary-lemma11})
depend only on the $r, r'$ and $N$-dimensional Lebesgue measure $|\omega|$, but not on the shape or position of $\omega$. This
uniform dependence is very important in our approach to prove the main result of this paper (see
also \cite{Apraiz-Escauriaza-Wang-Zhang,Escauriaza-Montaner-Zhang,Escauriaza-Montaner-Zhang2}).
\end{remark}

Let $0<2r<R<+\infty$ and $\rho>0.$ Suppose a positive measurable set $\omega\subset B_{r/2}(x_0)$  satisfy $|\omega|/|B_{r}(x_0)|>\rho$.
Let $g\in H^1(B_R(x_0,0))$ be a solution of
\begin{equation}\label{yu-6-23-9}
\begin{cases}
	\mbox{div}(A(x)\nabla g)+g_{x_{N+1}x_{N+1}}=0&\;\;\;\text{in}\;\; B_R(x_0,0),\\
	g(x,0)=0&\;\;\;\text{in}\;\; B_R(x_0),\\
	g_{x_{N+1}}(x,0)=z(x)&\;\;\;\text{in}\;\; B_R(x_0),
\end{cases}
\end{equation}
	where $z\in L^2(B_R(x_0))$.

\begin{lemma}\label{yu-proposition-7-1-1}
There exist constants $\hat{C}_{7}=\hat{C}_{7}(N,\rho,r,\lambda)>0$ and  $\alpha_{2}=\alpha_{2}(N,\rho,r,\lambda)\in(0,1)$  such that
{\begin{equation}\label{yu-7-3-b-2}
	\|g\|_{H^1(B_r(x_0,0))}\leq \hat{C}_{7}\|g\|_{H^1(B_{\frac{3r}{2}}(x_0,0))}^{\alpha_{2}}\|z\|^{1-\alpha_{2}}_{L^2(\omega)}.
\end{equation}}
\end{lemma}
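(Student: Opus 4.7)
The plan is to apply Lemma \ref{auxiliary-lemma} (Logunov--Malinnikova) to the gradient of $g$, after two preparatory reductions: first I would convert the $L^2$ bound on $\omega$ into a sup bound on a slightly smaller subset, and then I would convert sup norms on balls into $H^1$ norms by exploiting the vanishing Dirichlet trace and interior elliptic regularity.

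The first reduction bridges the mismatch between Lemma \ref{auxiliary-lemma}, which controls $(\nabla g,\partial_{N+1}g)$ on $B_r(x_0,0)$ by $\sup_\omega$, and the target right-hand side $\|z\|_{L^2(\omega)}^{1-\alpha_2}$. I would set $\mu_0:=2\|z\|_{L^2(\omega)}^2/|\omega|$ and define $\omega^*:=\{x\in\omega:|z(x)|^2\leq\mu_0\}$; a Chebyshev-type argument gives $|\omega\setminus\omega^*|\leq|\omega|/2$, so $\omega^*\subset B_{r/2}(x_0)$ with $|\omega^*|/|B_r(x_0)|\geq\rho/2$, while by construction
\[
\sup_{\omega^*}|z|\leq\sqrt{\mu_0}\leq C(\rho,r,N)\,\|z\|_{L^2(\omega)}.
\]
Because the Dirichlet condition $g(\cdot,0)\equiv0$ on $B_R(x_0)$ kills the tangential component of the gradient, one has $(\nabla g(\cdot,0),\partial_{N+1}g(\cdot,0))=(0,z)$ on $\omega^*$. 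Applying Lemma \ref{auxiliary-lemma} with, say, $r'=5r/4\in(r,3r/2)\subset(r,R)$ and with observation set $\omega^*$ (whose admissible ratio is now $\rho/2$ instead of $\rho$) then gives
\[
\sup_{B_r(x_0,0)}|(\nabla g,\partial_{N+1}g)|\leq \hat C_6\bigl(\sup_{\omega^*}|z|\bigr)^{\alpha_1}\bigl(\sup_{B_{5r/4}(x_0,0)}|(\nabla g,\partial_{N+1}g)|\bigr)^{1-\alpha_1}.
\]

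For the second reduction, on the left I would use $g(\cdot,0)=0$ and the fundamental theorem of calculus in the $x_{N+1}$-direction, $|g(x,x_{N+1})|\leq|x_{N+1}|\sup|\partial_{N+1}g|$, to obtain
\[
\|g\|_{H^1(B_r(x_0,0))}\leq C(r,N)\sup_{B_r(x_0,0)}|(\nabla g,\partial_{N+1}g)|.
\]
On the right, the equation for $g$ in $\mathbb R^{N+1}$ is uniformly elliptic in divergence form with Lipschitz coefficients (the matrix being the block $A(x)\oplus 1$), so standard interior $C^{1,\alpha}$ regularity on the buffer $(5r/4,3r/2)$ yields
\[
\sup_{B_{5r/4}(x_0,0)}|(\nabla g,\partial_{N+1}g)|\leq C(r,N,\lambda)\|g\|_{H^1(B_{3r/2}(x_0,0))}.
\]
Substituting these two displays and the bound of the first reduction into the Logunov--Malinnikova inequality delivers \eqref{yu-7-3-b-2} with $\alpha_2:=1-\alpha_1\in(0,1)$.

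The one non-routine ingredient is the level-set argument that produces $\omega^*$; the uniform dependence of $\hat C_6,\alpha_1$ on $\omega$ only through the ratio $|\omega|/|B_r(x_0)|$, emphasised in the remark after Lemma \ref{auxiliary-lemma}, is essential here, since otherwise replacing $\omega$ by $\omega^*$ would spoil the constants. Everything else---the vanishing of $\nabla_x g$ on the hyperplane, the one-dimensional Poincaré-type estimate in the transverse direction, and the interior elliptic regularity---is routine and will only contribute a multiplicative constant depending on $(r,N,\lambda,\rho)$.
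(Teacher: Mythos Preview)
Your argument is correct, but it takes a genuinely different route from the paper. The paper does \emph{not} pass directly from $\sup_{B_r(x_0,0)}|(\nabla g,\partial_{N+1}g)|$ to $\|g\|_{H^1(B_r(x_0,0))}$ via the transverse Poincar\'e/FTC inequality; instead it first invokes an independent quantitative Cauchy--uniqueness estimate (quoted from Lin, Lebeau--Zuazua, and L\"u),
\[
\|g\|_{H^1(B_r(x_0,0))}\leq C_6 r^{-2}\|g\|_{H^1(B_{7r/6}(x_0,0))}^{\gamma}\|z\|_{L^2(B_{7r/6}(x_0))}^{1-\gamma},
\]
and then uses Lemma~\ref{auxiliary-lemma} only to shrink the observation region for $z$ from the full slice $B_{7r/6}(x_0)$ down to $\omega$, by bounding $\|z\|_{L^\infty(B_{7r/6}(x_0))}$ in terms of $\|z\|_{L^\infty(\omega)}$ and $\|g\|_{H^1(B_{3r/2}(x_0,0))}$. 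The Chebyshev level-set step (identical to yours) comes last, converting $L^\infty(\omega)$ to $L^2(\omega)$. Your approach is more self-contained, since it eliminates the need for the external stability estimate altogether and exploits the vanishing Dirichlet trace twice (once for $\nabla_x g(\cdot,0)=0$, once for the FTC bound); the paper's approach keeps the two ingredients---Cauchy uniqueness and propagation of smallness---modular, at the cost of an extra citation. One technical point you should make explicit: Lemma~\ref{auxiliary-lemma} is stated with pointwise suprema, so $\partial_{N+1}g(\cdot,0)=z$ must make sense pointwise on $\omega^*$; the paper handles this by first treating $z\in C(\overline{B_R(x_0)})$ and then passing to general $z\in L^2(B_R(x_0))$ by density (using the $L^2$ stability estimate to justify the limit), and you should insert the same reduction.
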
	
\begin{proof}
We quote from \cite[Lemma 4.3]{Lin-1991}, \cite[Lemma A]{Lebeau-Zuazua} and \cite[Theorem 5.1]{LU-2013} that the following stability estimate holds: there exist constants $C_{6}=C_{6}(\lambda,N)>0$ and $\gamma=\gamma(r,\lambda,N)\in(0,1)$ such that
{\begin{equation}\label{yu-7-3-b-2000}
	\|g\|_{H^1(B_r(x_0,0))}\leq C_{6}r^{-2}\| g\|_{H^1(B_{\frac{7r}{6}}(x_0,0))}^{\gamma}\|z\|^{1-\gamma}_{L^2(B_{\frac{7r}{6}}(x_0))}.
\end{equation}}
We first assume $z\in C(\overline{B_R(x_0)})$.
 Applying Lemma \ref{auxiliary-lemma} to gradients of solutions for (\ref{yu-6-23-9}), we have
{\begin{equation}\label{yu-7-3-b-2000111}
	\|(\nabla g,\partial_{N+1}g)\|_{L^\infty(B_{\frac{7r}{6}}(x_0,0))}\leq C_{7}\| (\nabla g(\cdot,0),\partial_{N+1}g(\cdot,0))\|_{L^\infty(\omega)}^{\theta}\|(\nabla g,\partial_{N+1}g)\|^{1-\theta}_{L^{\infty}(B_{\frac{4r}{3}}(x_0,0))}
\end{equation}}
with $C_{7}=C_{7}(\rho,r,\lambda)>0$ and  $\theta=\theta(\rho,r,\lambda)\in(0,1).$

Next, we observe from the second line in (\ref{yu-6-23-9}) that $\nabla g(\cdot,0)=0$ in $ B_{R}(x_0)$. By (\ref{yu-7-3-b-2000111}) and the standard elliptic regularity (see, e.g., \cite[Theorem 8.32]{Gilbarg-Trudinger})
$$\|(\nabla g,\partial_{N+1}g)\|_{L^\infty(B_{\frac{4r}{3}}(x_0,0))}\leq C_{8}\| g\|_{H^1(B_{\frac{3r}{2}}(x_0,0))}$$
with $C_{8}=C_{8}(r,\lambda,N)>0$,
we have
$$\|z\|_{L^\infty(B_{\frac{7r}{6}}(x_0))}\leq \|(\nabla g,\partial_{N+1}g)\|_{L^\infty(B_{\frac{7r}{6}}(x_0,0))}\leq C_{7}C^{1-\theta}_{8}\|z\|_{L^\infty(\omega)}^{\theta}\| g\|^{1-\theta}_{H^1(B_{\frac{3r}{2}}(x_0,0))}.$$
This, combined with (\ref{yu-7-3-b-2000}), leads to
\begin{eqnarray}\label{yu-7-3-b-2000222}
	\|g\|_{H^1(B_r(x_0,0))}&\leq& C_{6}r^{-2}\| g\|_{H^1(B_{\frac{7r}{6}}(x_0,0))}^{\gamma}\|z\|^{1-\gamma}_{L^2(B_{\frac{7r}{6}}(x_0))}\nonumber\\
&\leq& C_{6}r^{-2}|B_{\frac{7r}{6}}|^{1-\gamma}\|g\|_{H^1(B_{\frac{7r}{6}}(x_0,0))}^{\gamma}\|z\|^{1-\gamma}_{L^\infty(B_{\frac{7r}{6}}(x_0))}\nonumber\\
&\leq& C_{6}r^{-2}|B_{\frac{7r}{6}}|^{1-\gamma}\|g\|_{H^1(B_{\frac{4}{3}r}(x_0,0))}^{\gamma}\left[C_{7}C^{1-\theta}_{8}\|z\|_{L^\infty(\omega)}^{\theta}\| g\|^{1-\theta}_{H^1(B_{\frac{3r}{2}}(x_0,0))}\right]^{1-\gamma}\nonumber\\
	&\leq& C_{9}\|z\|_{L^\infty(\omega)}^{\beta}\| g\|^{1-\beta}_{H^1(B_{\frac{3r}{2}}(x_0,0))}
\end{eqnarray}
with $C_{9}=C_{9}(r,\rho,N,\lambda)>0$ and  $\beta=\beta(r,\rho,N,\lambda)\in(0,1).$

Finally, we complete the proof by replacing the $L^\infty$-norm in (\ref{yu-7-3-b-2000222}) with $L^2$-norm. To this end, we define a new Lebesgue measurable set
{\begin{equation}\label{yu-7-3-b-2000333}
\tilde{\omega}:=\left\{x\in \omega: \left|z(x)\right|\leq\left(\frac{2}{|\omega|}\int_{\omega}|z|^{2}dx\right)^{\frac{1}{2}}\right\}.
\end{equation}}
It is clear that $$\int_{\omega}|z|^{2}dx\geq\int_{\omega\backslash \tilde{\omega}}|z|^{2}dx\geq\frac{2|\omega\backslash \tilde{\omega}|}{|\omega|}\int_{\omega}|z|^{2}dx,$$
where $\omega\backslash \tilde{\omega}=\omega\cap \tilde{\omega}^{c}$.
This implies that $|\omega\backslash \tilde{\omega}|\leq |\omega|/2$, and hence $\tilde{\omega}\geq |\omega|/ 2$ . Applying (\ref{yu-7-3-b-2000222}) with $\omega$ replaced by $\tilde{\omega}$
leads to
$$\|g\|_{H^1(B_r(x_0,0))}\leq C_{10}\| g\|_{H^1(B_{\frac{3r}{2}}(x_0,0))}^{\beta_{1}}\|z\|^{1-\beta_{1}}_{L^\infty(\tilde{\omega})}$$
for some new constant $C_{10}=C_{10}(r,\rho,N,\lambda)>0$ and  $\beta_{1}=\beta_{1}(r,\rho,N,\lambda)\in(0,1).$
This, together with (\ref{yu-7-3-b-2000333}), indicates the desired estimate
(\ref{yu-7-3-b-2}) for the case $z\in C(\overline{B_R(x_0)})$. This, together with (\ref{yu-7-3-b-2000}) and the fact
 $C(\overline{B_R(x_0)})$ is dense in $L^2(B_R(x_0))$, yields  that
 (\ref{yu-7-3-b-2}) holds for any $z\in L^2(B_R(x_0))$. This ends the proof.
\end{proof}

\section{Proof of Theorem \ref{yu-theorem-7-10-6}}\label{kaodu3}

In order to give the proof of Theorem \ref{yu-theorem-7-10-6}, we need  the following the interpolation inequality (i.e.,  Lemma \ref{lemma-2A2}), whose proof below is based on a reduction method \cite{LO} (see also \cite{Lin90}).  It is worth mentioning that  the interpolation inequality established in Lemma  \ref{lemma-2A2}
is  related to  the so-called two spheres and one cylinder inequality obtained  in \cite[Theorem 3.1.1$'$]{Canuto-Rosset-Vessella} (see also \cite[Theorem 2]{Escauriaza-Fernandez-Vessella-2006}) which has some applications in the study of inverse problem
 (see, for instance, \cite{Canuto-Rosset-Vessella}).  Comparing  with them, however, there are two main differences: $(i)$ the radius $r$ of the observation sphere is here allowed to be independent of the observation time $\tau$; $(ii)$ we  specify the dependence of the observability constant on the observation time $\tau$.

\begin{lemma}\label{lemma-2A2}
Let $T>0$, $0<r<+\infty$ and $\rho>0$. Suppose a positive measurable set $\omega\subset B_{r/2}(x_0)$  satisfy $|\omega|/|B_{r}(x_0)|>\rho$.  There exist constants $\hat{C}_{8}=\hat{C}_{8}(r,N,\lambda,\rho)>0$ and  $\sigma=\sigma(r,N,\lambda,\rho)\in(0,1)$ such that for each
$\tau\in(0,T/2)$, the corresponding solution  $u$ of \eqref{yu-6-24-1} with the initial value $u_{0}\in H^{1}(\mathbb R^N)$ satisfies
\begin{equation*}\label{yu-7-10-2}
	\|u(\cdot,\tau)\|_{L^2(B_r(x_0))}\leq \hat{C}_{8}
	\left[T^{2}e^{\frac{\hat{C}_{1}T}{T-\tau}}+e^{\frac{\hat{C}_{8}}{\tau}}\right]\|u(\cdot,\tau)\|_{L^2(\omega)}^\sigma
	\left(\sup_{s\in[0,T]}\|u(\cdot,s)\|^{2}_{H^1(B_{\varrho r}(x_0))}\right)^{\frac{1-\sigma}{2}},
	\end{equation*}
where $\varrho:=16\hat{C}_{5}e$, the constant $\hat{C}_1>0$ is given by Lemma \ref{yu-lemma-6-10-1} and the constant $\hat{C}_5=\sqrt{8\pi^2\sqrt{2+\lambda^6}}$ is given by Lemma \ref{yu-lemma-6-18-1}.
\end{lemma}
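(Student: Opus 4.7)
The plan is to employ the reduction method of \cite{LO,Lin90} to recast the parabolic problem as an elliptic Cauchy problem in the cylinder $B_{R/2}(x_0)\times\mathbb R\subset\mathbb R^{N+1}$, and then to invoke the propagation-of-smallness estimate of Lemma \ref{yu-proposition-7-1-1}. The cutoff solution $v$ of \eqref{yu-11-29-4} and its partial Fourier transform $\hat v(x,\mu)$ from Section \ref{kaodu1} are the main building blocks, and the choice $R=\varrho r=16\hat C_5 e r$ is dictated by the exponential decay \eqref{yu-6-22-16}.

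The core step is the construction of an elliptic lift of $v$. For $\mu\in\mathbb R\setminus\{0\}$ let $\nu_\mu$ denote the square root of $-i\mu$ with positive real part, so $\nu_\mu^2=-i\mu$ and $|\nu_\mu|=\sqrt{|\mu|}$, and set
\begin{equation*}
g(x,\xi):=\mathrm{Re}\int_{\mathbb R}e^{i\mu\tau}\,\hat v(x,\mu)\,\frac{\sinh(\nu_\mu\xi)}{\nu_\mu}\,d\mu.
\end{equation*}
Using \eqref{yu-6-18-7} and differentiating under the integral, the $\mathbb R$-valued function $g$ satisfies $\mathrm{div}(A(x)\nabla_x g)+g_{\xi\xi}=0$ with Cauchy data $g(x,0)=0$ and $g_\xi(x,0)=2\pi v(x,\tau)$. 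Since $|\sinh(\nu_\mu\xi)/\nu_\mu|\leq e^{\sqrt{|\mu|}|\xi|}/\sqrt{|\mu|}$ and, by \eqref{yu-6-22-16}, $\|\hat v(\cdot,\mu)\|_{L^2(B_{R/2}(x_0))}$ decays as $e^{-\sqrt{|\mu|}R/(4e\hat C_5)}=e^{-4\sqrt{|\mu|}r}$, the defining integral and its $x$- and $\xi$-derivatives converge absolutely for $|\xi|<3r/2$; combining with \eqref{yu-6-23-5} and Cauchy--Schwarz yields
\begin{equation*}
\|g\|_{H^1(B_{3r/2}(x_0,0))}\leq C(r,N,\lambda)(T+R^2)\,e^{\hat C_1 T/(T-\tau)}\,F(R).
\end{equation*}

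Applying Lemma \ref{yu-proposition-7-1-1} (after slightly enlarging the inner radius) to $g$ with $z=g_\xi(\cdot,0)=2\pi v(\cdot,\tau)$ and equidistributed set $\omega\subset B_{r/2}(x_0)$ yields an $H^1$-interpolation bound for $g$; interior $H^2$-regularity and the trace theorem for elliptic equations then recover $\|v(\cdot,\tau)\|_{L^2(B_r(x_0))}$ on the left-hand side. The passage from $v$ to $u$ uses the decomposition $u=v-w$ on $B_R\times[0,\tau]$ coming from \eqref{yu-6-6-7} (recall $\eta\equiv 1$ on $[0,\tau]$): triangle inequality gives both $\|u(\cdot,\tau)\|_{L^2(B_r)}\leq\|v(\cdot,\tau)\|_{L^2(B_r)}+\|w(\cdot,\tau)\|_{L^2(B_R)}$ and $\|v(\cdot,\tau)\|_{L^2(\omega)}\leq\|u(\cdot,\tau)\|_{L^2(\omega)}+\|w(\cdot,\tau)\|_{L^2(B_R)}$, and the spectral-decay bounds \eqref{yu-6-12-4}--\eqref{yu-6-14-4} for the Dirichlet semigroup on $B_R$ control $\|w(\cdot,\tau)\|_{L^2(B_R)}$; absorbing this correction via a short-time iteration is what produces the additive factor $e^{\hat C_8/\tau}$ in the prefactor. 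Setting $\sigma=1-\alpha_2$ and collecting constants closes the argument. The principal technical obstacle is the analysis of $g$---absolute convergence, interchange of $\mathrm{Re}$, differentiation and integration, and the inheritance of the elliptic PDE from \eqref{yu-6-18-7}---all of which rely crucially on the calibration $R=\varrho r$, so that the $e^{-4\sqrt{|\mu|}r}$ decay of $\hat v$ dominates the $e^{\sqrt{|\mu|}|\xi|}$ growth of $\sinh(\nu_\mu\xi)/\nu_\mu$ throughout $|\xi|\leq 3r/2$; the secondary difficulty is the bookkeeping needed to absorb the Dirichlet correction $w(\cdot,\tau)$ without degrading the interpolation exponents.
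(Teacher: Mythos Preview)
Your reduction strategy and the use of Lemmas \ref{yu-lemma-6-18-1} and \ref{yu-proposition-7-1-1} are on target, but the argument breaks down at the passage from $v$ back to $u$. Your elliptic lift $g$ has Cauchy data $g_\xi(\cdot,0)=2\pi v(\cdot,\tau)$, not $u(\cdot,\tau)$, so Lemma \ref{yu-proposition-7-1-1} delivers an interpolation inequality between $\|v(\cdot,\tau)\|_{L^2(\omega)}$ and $F(R)$. You then propose to recover $u$ via $u=v-w$ and the triangle inequality, claiming the spectral bounds \eqref{yu-6-12-4}--\eqref{yu-6-14-4} control $\|w(\cdot,\tau)\|_{L^2(B_R)}$. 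But those bounds are valid only for $t\geq T$; at time $\tau\in(0,T/2)$ one has merely $\|w(\cdot,\tau)\|_{L^2(B_R)}\leq e^{-\mu_1\tau}\|u_0\|_{L^2(B_R)}$, which is of the \emph{same order} as $F(R)$. After the triangle inequality this term enters additively on both sides and swamps the interpolation: you end up with an estimate of the form $\|u(\cdot,\tau)\|_{L^2(B_r)}\lesssim F(R)$, which is trivial. No ``short-time iteration'' can repair this, because the obstruction is not a small remainder but a term comparable to the main quantity.

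The paper avoids this by building the elliptic lift so that its normal derivative equals $u(\cdot,\tau)$ \emph{exactly}. Writing $u=v_1+u_2$ on $[0,\tau]$ (with $u_2$ the homogeneous Dirichlet evolution of $u_0$ on $B_R$), one sets $V=V_1+V_2$ where $V_1$ is your $g/(2\pi)$ and
\[
V_2(x,y)=\sum_{k\geq 1}\langle u_0,f_k\rangle\,e^{-\mu_k\tau}\,\frac{\sinh(\sqrt{\mu_k}\,y)}{\sqrt{\mu_k}}\,f_k(x),
\]
so that $\partial_y V_2(\cdot,0)=u_2(\cdot,\tau)$ and hence $V_y(\cdot,0)=u(\cdot,\tau)$. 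The factor $e^{\hat C_8/\tau}$ then arises not from any iteration but from estimating $\|V_2\|_{L^2(B_{2r}(x_0,0))}$: balancing $e^{-\mu_k\tau}$ against $|\sinh(\sqrt{\mu_k}y)|\leq e^{\sqrt{\mu_k}|y|}$ via $2\sqrt{\mu_k}|y|\leq \mu_k\tau+|y|^2/\tau$ produces the $e^{Cr^2/\tau}$ growth. This is the missing idea; once $V_2$ is in place, Lemma \ref{yu-proposition-7-1-1} applies with $z=u(\cdot,\tau)$ directly and no correction step is needed.
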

\begin{proof}
Taking $u_0\in H^1(\mathbb{R}^N)$ arbitrarily. Let $u$ be the solution to the equation (\ref{yu-6-24-1})  with the initial value $u_{0}\in H^{1}(\mathbb R^N)$, and $R:=\varrho r$.
Let $u_1$ and $u_2$ be accordingly the solutions to
\begin{equation*}\label{yu-7-4-4}
\begin{cases}
	\partial_tu_{1}-\mbox{div}(A(x)\nabla u_1)=0&\mbox{in} \;B_R(x_0)\times(0,2T),\\
	u_1=u&\mbox{on}\;\;\partial B_R(x_0)\times(0,2T),\\
	u_1(\cdot,0)=0 &\mbox{in}\;\;B_{R}(x_0)
\end{cases}
\end{equation*}	
	and
\begin{equation*}\label{yu-7-4-5}
\begin{cases}
	\partial_tu_{2}-\mbox{div}(A(x)\nabla u_2)=0&\mbox{in}\;\;  B_R(x_0)\times(0,2T),\\
	u_2=0&\mbox{on}\;\;\partial B_R(x_0)\times(0,2T),\\
	u_2(\cdot,0)=u_0&\mbox{in}\;\; B_{R}(x_0).
\end{cases}
\end{equation*}
It is clear that $u=u_1+u_2$ in $B_R(x_0)\times[0,2T]$.
By the standard energy estimate for solutions of parabolic equations, we have
\begin{equation}\label{yu-7-4-7}
	\sup_{t\in[0,T]}\|u_2(\cdot,t)\|_{H^1(B_R(x_0))}\leq C_{11} \|u_0\|_{H^1(B_R(x_0))}
\end{equation}
with $C_{11}=C_{11}(N,\lambda)>0.$

Arbitrarily fix  $\tau\in(0,T/2)$. Let $v_1$ be the solution to
 \begin{equation*}\label{yu-11-29-4-jia}
\begin{cases}
    \partial_tv_1-\mbox{div}(A(x)\nabla v_1)=0&\mbox{in}\;\;B_{R}(x_0)\times\mathbb{R}^+,\\
    v_1=\eta u&\mbox{on}\;\;\partial B_R(x_0)\times\mathbb{R}^+,\\
    v_1(\cdot,0)=0&\mbox{in}\;\; B_R(x_0),
\end{cases}
\end{equation*}
where $\eta$ is given by \eqref{yu-6-6-6}.
It is clear that
	 $u=v_1+u_2$ in $B_R(x_0)\times[0,\tau]$.
	 We extend $v_1$ to $\mathbb R^-\times B_R(x_0)$ by zero, and still denote it by the same way.
Define
\begin{equation*}\label{yu-6-18-6jia}
	\hat{v}_1(x,\mu)=\int_{\mathbb{R}}e^{-i\mu t}v_1(x,t)dt
	\quad\text{for}\;\;(x,\mu)\in B_R(x_0)\times\mathbb R.
\end{equation*}
Note from Lemma \ref{yu-lemma-6-10-1} that $\hat{v}_1$ is well defined.
\par
Let
\begin{equation*}\label{yu-7-5-bb-1}
	\kappa:=\frac{\sqrt{2}}{4e\hat{C}_{5}} \quad\text{with}\;\;\hat{C}_{5}=\sqrt{8\pi^2\sqrt{2+\lambda^{6}}}\;\;\text{given in Lemma
	\ref{yu-lemma-6-18-1}}.
\end{equation*}
We  define
$$V=V_1+V_2\quad\mbox{in}\;\; B_R(x_0)\times(-\kappa R,\kappa R)
$$
 with
 \begin{equation}\label{yu-6-23-6jia}
	V_1(x,y)=\frac{1}{2\pi}\int_{\mathbb{R}}e^{i\tau\mu}\hat{v}_1(x,\mu)
	\frac{\sinh(\sqrt{-i\mu}y)}{\sqrt{-i\mu}}d\mu\quad\mbox{in}\;B_R(x_0)\times(-\kappa R,\kappa R),
\end{equation}
\begin{equation}\label{yu-7-5-7}
	V_2(x,y)=\sum_{k=1}^\infty\alpha_ke^{-\mu_k\tau}f_k(x)\frac{\sinh(\sqrt{\mu_k}y)}
	{\sqrt{\mu_k}}\;\;\mbox{in}\;\;B_R(x_0)\times(-\kappa R,\kappa R)
\end{equation}
where $\{\mu_k\}_{k=1}^\infty$, $\{f_k\}_{k=1}^{\infty}$ are  given by $(\ref{yu-6-7-10})$, and
$\alpha_k=\langle u_2(\cdot,0),f_k\rangle_{L^2(B_R(x_0))}$.
Note from
Lemma \ref{yu-lemma-6-18-1} that $V_1$ is also well defined.
One can readily check that
\begin{equation}\label{yu-7-5-9}
\begin{cases}
	\mbox{div}(A(x)\nabla V(x,y))+V_{yy}(x,y)=0&\mbox{in}\;\;
	B_{\frac{R}{2}}(x_0)\times(-\kappa R,\kappa R),\\
	V(x,0)=0&\mbox{in}\;\;B_{\frac{R}{2}}(x_0),\\
	V_y(x,0)=u(x,\tau)&\mbox{in}\;\;B_{\frac{R}{2}}(x_0).
\end{cases}
\end{equation}	
Let $r\in (0, \kappa R/2)$. By the three-ball inequality for elliptic operators (cf., e.g., \cite[Theorem 3.1]{MV}), there exist constants
$C_{13}=C_{13}(\lambda,N,r)>0$ and $\beta_{2}=\beta_{2}(\lambda,N,r)\in(0,1)$ such that
\begin{equation}\label{yu-7-4-10}
      \|V\|_{L^2(B_{\frac{7r}{4}}(x_0,0))}\leq C_{13}\|V\|^{\beta_{2}}_{L^2(B_{r}(x_0,0))}\|V\|_{L^2(B_{2r}(x_0,0))}^{1-\beta_{2}}.
\end{equation}
Since $V_y$ satisfies the first equation of (\ref{yu-7-5-9}),	by the interior estimate of elliptic equations
we find
\begin{eqnarray}\label{yu-7-4-11}
	\int_{B_{\frac{7r}{4}}(x_0,0)}|V|^2dxdy&\geq &C_{14}r^2\int_{B_{\frac{3r}{2}}(x_0,0)}(|\nabla V|^2+|V_y|^2)dxdy\nonumber
	\\&\geq&\frac{C_{14}r^2}{2}
	\left(\int_{B_{\frac{3r}{2}}(x_0,0)}|V_y|^2dxdy+\int_{B_{\frac{3r}{2}}(x_0,0)}|V_y|^2dxdy\right)\nonumber\\
	&\geq&C_{14}r^2\left(\int_{B_{\frac{3r}{2}}(x_0,0)}|V_y|^2dxdy+r^2\int_{B_{\frac{5r}{4}}(x_0,0)}(|\nabla V_y|^2+|V_{yy}|^2)dxdy\right)\nonumber\\
	&\geq& C_{14}r^3\left(\frac{1}{r}\int_{B_{\frac{5r}{4}}(x_0,0)}|V_y|^2dxdy+r\int_{B_{\frac{5r}{4}}(x_0,0)}|V_{yy}|^2dxdy\right)
\end{eqnarray}
with $C_{14}=C_{14}(\lambda,N)>0$.

As a simple corollary of \cite[Lemma 9.9, Page 315]{Brezis}, we have the following  trace theorem
\begin{equation*}\label{yu-7-4-12}
	\int_{B_{r}(x_0)}|f(x,0)|^2dx\leq C_{15}(N)\left(\frac{1}{r}\int_{B_{\frac{5r}{4}}(x_0,0)}|f|^2dxdy+r\int_{B_{\frac{5r}{4}}
(x_0,0)}|f_y|^2dxdy\right)
\end{equation*}
        for any $f\in H^1(B_{5r/4}(x_0,0))$. Hence, by (\ref{yu-7-5-9}) and (\ref{yu-7-4-11}) we have
\begin{eqnarray}\label{yu-7-4-13}
	C_{16}r^3\int_{B_{r}(x_0)}|u(x,\tau)|^2dx\leq \int_{B_{\frac{7r}{4}}(x_0,0)}|V|^2dxdy
\end{eqnarray}	
with $C_{16}=C_{16}(\lambda,N)>0$.

By Lemma \ref{yu-proposition-7-1-1},  we obtain that there is  $C_{17}=C_{17}(\rho,N,r,\lambda)>0$ and  $\beta_{3}=\beta_{3}(\rho,N,r,\lambda)\in(0,1)$ such that
\begin{equation}\label{yu-7-4-2}
	\|V\|_{L^2(B_r(x_0,0))}\leq C_{17}\|V\|_{H^1(B_{\frac{3r}{2}}(x_0,0))}^{\beta_{3}}\|u(\cdot,\tau)\|_{L^2(\omega)}^{1-\beta_{3}}.
\end{equation}
Again, by the interior estimate, there is a constant $C_{18}=C_{18}(N)>0$ such that
\begin{equation}\label{yu-7-4-3}
	\|V\|_{H^1(B_{\frac{3r}{2}}(x_0,0))}\leq C_{18}r^{-1} \|V\|_{L^2(B_{2r}(x_0,0))}.
\end{equation}
Hence, it follows from  (\ref{yu-7-4-2}) and (\ref{yu-7-4-3}) that
\begin{equation}\label{yu-7-4-1}
	\|V\|_{L^2(B_r(x_0,0))}\leq C_{19}\|V\|_{L^2(B_{2r}(x_0,0))}^{\beta_{3}}\|u(\cdot,\tau)\|_{L^2(\omega)}^{1-\beta_{3}}
\end{equation}
with $C_{19}=C_{19}(\rho,N,r,\lambda)>0$.
It follows from  (\ref{yu-7-4-13}), (\ref{yu-7-4-10}) and \eqref{yu-7-4-1} that
\begin{equation}\label{yu-7-5-1}
	\|u(\cdot,\tau)\|_{L^2(B_{r}(x_0))}\leq C_{20}\|u(\cdot,\tau)\|_{L^2(\omega)}^{(1-\beta_{3})\beta_{2}}
	\|V\|^{1-(1-\beta_{3})\beta_{2}}_{L^2(B_{2r}(x_0,0))}
\end{equation}
with $C_{20}=C_{20}(\rho,N,r,\lambda)>0$.
\par
\medskip

To finish the proof, it suffices to bound the term $\|V\|_{L^2(B_{2r}(x_0,0))}$. Recall that $V=V_1+V_2$, we will treat
$V_1$ and $V_2$ separately.

In fact, we derive from (\ref{yu-6-23-6jia})  that
	for each $x\in B_{2r}(x_0)\subset B_{R}(x_0)$ and $|y|<\kappa R/(4\sqrt{2})$,
\begin{eqnarray*}\label{yu-7-5-2}
	|V_1(x,y)|
	&=&\left|\frac{1}{2\pi}\int_{\mathbb{R}}e^{i\tau\mu}\hat{v}_1(x,\mu)\int_{-y}^ye^{\sqrt{-i\mu}s}dsd\mu\right|
	\leq \frac{1}{2\pi}\int_{\mathbb{R}}|\hat{v}_1(x,\mu)|\int_{-y}^y|e^{\sqrt{-i\mu}s}|dsd\mu
	\nonumber\\
	&\leq&\frac{\kappa R}{4\sqrt{2}\pi}\int_{\mathbb{R}}|\hat{v}_1(x,\mu)|e^{\frac{1}{4\sqrt{2}}\kappa\sqrt{|\mu|}R}d\mu\nonumber\\
	&\leq &\frac{\kappa R}{4\sqrt{2}\pi}\left(\int_{\mathbb{R}}|\hat{v}_1(x,\mu)|^2e^{\frac{1}{\sqrt{2}}\kappa\sqrt{|\mu|}R}d\mu\right)^{\frac{1}{2}}
	\left(\int_{\mathbb{R}}e^{-\frac{1}{2\sqrt{2}}\kappa \sqrt{|\mu|}R}d\mu\right)^{\frac{1}{2}}\nonumber\\
	&=&\frac{1}{2\pi}\left(\int_{\mathbb{R}}|\hat{v}_1(x,\mu)|^2e^{\frac{1}{\sqrt{2}}\kappa\sqrt{|\mu|}R}d\mu\right)^{\frac{1}{2}}.
\end{eqnarray*}
	Hence, by Lemma \ref{yu-lemma-6-18-1}, we have for each $r<\kappa R/(4\sqrt{2})$,
\begin{eqnarray}\label{yu-7-5-3}
	\int_{B_{2r}(x_0,0)}|V_1|^2dxdy
	&\leq& \frac{1}{2\pi^2}\hat{C}_{4}^2Re^{\frac{2\hat{C}_{1}T}{T-\tau}}
	\left[T+R^{2}\right]^{2}F^2(R)
	\int_{\mathbb{R}}e^{-\frac{1}{\sqrt{2}}\kappa \sqrt{|\mu|}R}d\mu\nonumber\\
	&\leq&\frac{32e}{\pi^2}\hat{C}_{5}^2\hat{C}_{4}^2R^{-1}e^{\frac{2\hat{C}_{1}T}{T-\tau}}
	\left[T+R^{2}\right]^{2}F^2(R).
\end{eqnarray}
While, by  (\ref{yu-7-5-7}) and (\ref{yu-7-4-7}) we obtain
\begin{eqnarray}\label{yu-7-5-10}
	\int_{B_{2r}(x_0,0)}|V_2|^2dxdy&\leq&\int_{-2r}^{2r}\int_{B_{R}(x_0)}|V_2|^2dxdy
	\leq \int_{-2r}^{2r}\sum_{k=1}^\infty\alpha_k^2e^{-2\mu_k\tau}\left|\frac{\sinh(\sqrt{\mu_k}y)}{\sqrt{\mu_k}}\right|^2dy\nonumber\\
	&\leq&\frac{e^{\frac{2r^{2}}{\tau}}}{4\hat{C}_{2}^{3/2}}R^{3}\sum_{k=1}^\infty\alpha_k^2=
	\frac{e^{\frac{2r^{2}}{\tau}}}{4\hat{C}_{2}^{3/2}}R^{3}\int_{B_R(x_0)}|u(x,0)|^2dx\nonumber\\
	&\leq& C_{21}R^{3}e^{\frac{C_{21}}{\tau}}F^2(R)
\end{eqnarray}
with $C_{21}=C_{21}(N,r,\lambda)>0$.
	Therefore, by (\ref{yu-7-5-3}) and (\ref{yu-7-5-10}), we conclude that
\begin{equation*}\label{yu-7-5-11}
	\|V\|_{L^2(B_{2r}(x_0,0))}\leq C_{22}\left[e^{\frac{\hat{C}_{1}T}{T-\tau}}R^{-1}
	\left(T+R^{2}\right)^{2}+e^{\frac{C_{21}}{\tau}}R^{3}\right]^{\frac{1}{2}}F(R)
\end{equation*}
with $C_{22}=C_{22}(N,r,\lambda)>0$.
	This, together with (\ref{yu-7-5-1}), means that
\begin{eqnarray*}\label{yu-7-5-12}
	\|u(\cdot,\tau)\|_{L^2(B_{r}(x_0))}
	&\leq& C_{20}\|u(\cdot,\tau)\|_{L^2(\omega)}^{(1-\beta_{3})\beta_{2}}
	\|V\|^{1-(1-\beta_{3})\beta_{2}}_{L^2(B_{2r}(x_0,0))}\nonumber\\
&\leq& C_{20}C_{22}^{1-(1-\beta_{3})\beta_{2}}\left[e^{\frac{\hat{C}_{1}T}{T-\tau}}R^{-1}
	\left(T+R^{2}\right)^{2}+e^{\frac{C_{21}}{\tau}}R^{3}\right]^{\frac{1-(1-\beta_{3})\beta_{2}}{2}}\nonumber\\
&&\times \|u(\cdot,\tau)\|_{L^2(\omega)}^{(1-\beta_{3})\beta_{2}}F(R)^{1-(1-\beta_{3})\beta_{2}}.
\end{eqnarray*}
	Taking $\sigma=(1-\beta_{3})\beta_{2}$, the proof is immediately achieved by the arbitrariness of $u_0$.
\end{proof}

\begin{lemma}\label{yu-LEMMA-7-10-6}
Let $T>0$, $0<r<+\infty$ and $\rho>0$. Assume that there is a sequence $\{x_i\}_{i\in\mathbb{N}^+}\subset\mathbb R^N$ so that
\begin{equation*}\mathbb{R}^{N}=\bigcup_{i\in\mathbb{N}^+}Q_{r}(x_{i})
\quad \text{with}\quad \mathrm{int}(Q_{r}(x_{i}))\bigcap \mathrm{int}(Q_{r}(x_{j}))=\emptyset\quad \text{for each}\quad i\neq j\in\mathbb N^+.
\end{equation*}
Let $$\widetilde{\omega}\triangleq\bigcup_{i\in\mathbb{N}^+}\widetilde{\omega}_{i} \quad\text{with} \quad \widetilde{\omega}_{i} \subset B_{r/2}(x_{i})\quad\text{satisfy}\quad \frac{|\widetilde{\omega}_{i}|}{|B_{r}(x_i)|}\geq\rho \quad\text{for each}\quad i\in\mathbb N^+ $$
where, for each $i\in\mathbb{N}^+$, $\widetilde{\omega}_{i}$ is a $N$-dimensional Lebesgue measurable set of positive measure.
There exist constants $C=C(r,N,\lambda,\rho)>0$ and  $\sigma=\sigma(r,N,\lambda,\rho)\in(0,1)$ such that for any $u_{0}\in H^{1}(\mathbb R^N)$, the corresponding solution $u$ of \eqref{yu-6-24-1} satisfies
\begin{equation*}\label{yu-7-10-2}
	\|u(\cdot,T)\|_{L^2(\mathbb R^N)}\leq C
	\left(T^{3}+e^{\frac{C}{T}}\right)\|u(\cdot,T)\|_{L^2(\tilde{\omega})}^\sigma
	\left(\sup_{s\in[0,2T]}\|u(\cdot,s)\|_{H^1(\mathbb R^N)}\right)^{1-\sigma}.
	\end{equation*}
\end{lemma}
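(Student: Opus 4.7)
The plan is to apply Lemma~\ref{lemma-2A2} in a ball around each cube center $x_i$, square and sum the resulting local inequalities over $i\in\mathbb N^+$, and then invoke H\"older's inequality to recover the H\"older-interpolation structure on all of $\mathbb R^N$. Since $Q_r(x_i)\subset B_{r\sqrt N}(x_i)$ and $\widetilde{\omega}_i\subset B_{r/2}(x_i)\subset B_{r\sqrt N/2}(x_i)$ with $|\widetilde{\omega}_i|/|B_{r\sqrt N}(x_i)|\geq \rho/N^{N/2}$, Lemma~\ref{lemma-2A2} applies with the enlarged radius $r':=r\sqrt N$, the rescaled density $\rho':=\rho/N^{N/2}$, time horizon $2T$, and $\tau=T$. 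Squaring the resulting inequality gives, for every $i\in\mathbb N^+$,
\begin{equation*}
\|u(\cdot,T)\|_{L^2(Q_r(x_i))}^2 \;\leq\; \|u(\cdot,T)\|_{L^2(B_{r'}(x_i))}^2 \;\leq\; M^2\,b_i^{2\sigma}\,c_i^{2(1-\sigma)},
\end{equation*}
with $b_i:=\|u(\cdot,T)\|_{L^2(\widetilde{\omega}_i)}$, $c_i:=\sup_{s\in[0,2T]}\|u(\cdot,s)\|_{H^1(B_{\varrho r'}(x_i))}$, $\sigma=\sigma(r,N,\lambda,\rho)\in(0,1)$, and $M\leq \hat C\,(T^2+e^{\hat C/T})$.

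Summing in $i$ and using that $\{Q_r(x_i)\}$ tiles $\mathbb R^N$ with pairwise disjoint interiors gives $\|u(\cdot,T)\|_{L^2(\mathbb R^N)}^2\leq M^2\sum_i b_i^{2\sigma}c_i^{2(1-\sigma)}$. H\"older's inequality with conjugate exponents $(1/\sigma,\,1/(1-\sigma))$ then splits the sum as
\begin{equation*}
\sum_i b_i^{2\sigma}\,c_i^{2(1-\sigma)} \;\leq\; \Bigl(\sum_i b_i^2\Bigr)^{\sigma}\Bigl(\sum_i c_i^2\Bigr)^{1-\sigma}.
\end{equation*}
Because the sets $\widetilde{\omega}_i\subset B_{r/2}(x_i)\subset\mathrm{int}(Q_r(x_i))$ are pairwise disjoint, one immediately has $\sum_i b_i^2=\|u(\cdot,T)\|_{L^2(\widetilde{\omega})}^2$.

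The main obstacle is to bound $\sum_i c_i^2$ by a constant (times a polynomial in $T$) multiple of $C^2:=\bigl(\sup_{s\in[0,2T]}\|u(\cdot,s)\|_{H^1(\mathbb R^N)}\bigr)^2$. Because the centers $\{x_i\}$ are $\sim r$-separated, the balls $\{B_{\varrho r'}(x_i)\}$ cover $\mathbb R^N$ with finite multiplicity $K=K(\varrho,N)$, so for each fixed $s$ one has the bounded-overlap estimate
\begin{equation*}
\sum_i\|u(\cdot,s)\|_{H^1(B_{\varrho r'}(x_i))}^2 \;\leq\; K\,\|u(\cdot,s)\|_{H^1(\mathbb R^N)}^2 \;\leq\; K\,C^2.
\end{equation*}
The technical difficulty is that $c_i^2$ involves a supremum in $s$, and in general $\sum_i\sup_s\geq\sup_s\sum_i$. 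To exchange them, I would exploit the energy identities for \eqref{yu-6-24-1}: multiplying by $u$ yields $\|u(\cdot,s)\|_{L^2(\mathbb R^N)}\leq\|u_0\|_{L^2(\mathbb R^N)}$, while multiplying by $u_t$ and using the symmetry of $A$ shows that $s\mapsto\int_{\mathbb R^N} A\nabla u\cdot\nabla u\,dx$ is non-increasing; together these give $\sup_{s\geq 0}\|u(\cdot,s)\|_{H^1(\mathbb R^N)}\leq \lambda\|u_0\|_{H^1(\mathbb R^N)}$. Then the fundamental-theorem-of-calculus bound $\sup_{s\in[0,2T]}f_i(s)\leq f_i(0)+\int_0^{2T}|f_i'(s)|\,ds$ applied to $f_i(s):=\|u(\cdot,s)\|_{H^1(B_{\varrho r'}(x_i))}^2$, combined with Cauchy--Schwarz in $i$ (using bounded overlap) and the parabolic energy identity $\int_0^{2T}\|u_t\|_{L^2(\mathbb R^N)}^2\,ds\leq \lambda\|\nabla u_0\|_{L^2(\mathbb R^N)}^2$, yields a bound of the form $\sum_i c_i^2\leq K'(1+T^2)\,C^2$ with $K'=K'(r,N,\lambda,\rho)$.

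Putting the pieces together and taking square roots produces
\begin{equation*}
\|u(\cdot,T)\|_{L^2(\mathbb R^N)} \;\leq\; M\,\bigl(K'(1+T^2)\bigr)^{(1-\sigma)/2}\|u(\cdot,T)\|_{L^2(\widetilde{\omega})}^{\sigma}\,C^{1-\sigma}.
\end{equation*}
Since $M(1+T^2)^{(1-\sigma)/2}\leq C\,(T^{3}+e^{C/T})$ for an enlarged constant $C=C(r,N,\lambda,\rho)$, absorbing factors yields the stated inequality. The technical heart of the proof is the sup--sum exchange used to control $\sum_i c_i^2$; everything else is bookkeeping that packages Lemma~\ref{lemma-2A2}, the tiling geometry, H\"older's inequality, and the global energy estimates for \eqref{yu-6-24-1}.
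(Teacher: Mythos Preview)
Your overall architecture matches the paper's: apply Lemma~\ref{lemma-2A2} on each cube (with radius $r\sqrt N$), sum, and split the product. The paper uses Young's inequality with a free parameter $\varepsilon$ and then reassembles, which is equivalent to your direct H\"older step. The paper likewise reduces everything to the bound $\sum_i c_i^2\leq \hat C_{10}(1+T)\,C^2$ via bounded overlap, so the endpoint you are aiming at is the right one.

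The gap is in your execution of the sup--sum exchange. Differentiating $f_i(s)=\|u(\cdot,s)\|_{H^1(B_{\varrho r'}(x_i))}^2$ produces a term $\int_{B_{\varrho r'}(x_i)}\nabla u\cdot\nabla u_t\,dx$. With $u_0\in H^1$ one has $u_t\in L^2(0,2T;L^2(\mathbb R^N))$ but not $\nabla u_t\in L^2$, and because $B_{\varrho r'}(x_i)$ has a hard boundary you cannot integrate by parts to trade $\nabla u_t$ for $u_t$ without creating uncontrolled boundary terms. Consequently the estimate $\sum_i\int_0^{2T}|f_i'|\,ds\lesssim (1+T^2)C^2$, which you justify using only $\int_0^{2T}\|u_t\|_{L^2}^2\,ds$, is not established as written.

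The paper avoids this by replacing the sharp ball with a smooth cutoff $\psi_i\in C_0^\infty(Q_{2\sqrt N R}(x_i))$ equal to $1$ on $B_{\sqrt N R}(x_i)$ and running the standard parabolic energy estimate on $v_i:=\psi_i u$, which satisfies $(v_i)_t-\mathrm{div}(A\nabla v_i)=-2\nabla\psi_i\cdot(A\nabla u)-u\,\mathrm{div}(A\nabla\psi_i)$ with zero Dirichlet data. Multiplying by $v_i$ and by $(v_i)_t$ yields
\[
\sup_{s\in[0,T]}\|v_i(\cdot,s)\|_{H^1}^2\;\leq\; C\Bigl(\|u_0\|_{H^1(Q_{2\sqrt N R}(x_i))}^2+\int_0^T\|u(\cdot,s)\|_{H^1(Q_{2\sqrt N R}(x_i))}^2\,ds\Bigr),
\]
which involves only $\|u\|_{H^1}$ and no $\nabla u_t$; summing in $i$ with bounded overlap gives the factor $(1+T)$. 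Your FTC idea can be repaired in the same spirit: if you define $f_i$ with a smooth weight $\chi_i^2$ and with the energy form $A\nabla u\cdot\nabla u$ in place of $|\nabla u|^2$, integration by parts converts the offending term into $-\int\chi_i^2|u_t|^2\,dx$ plus lower-order pieces controlled by $\|u\|_{H^1}$ and $\|u_t\|_{L^2}$, after which your argument goes through.
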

\begin{proof}
By Lemma~\ref{lemma-2A2} (where $r$ and $\omega$ are replaced by  $ \sqrt{N}r$ and $\widetilde{\omega}_{i}$ ($i\in\mathbb{N}^+$), respectively), we get that
\begin{eqnarray*}
 \int_{Q_{r}(x_{i})}|u(x,\tau)|^{2}\mathrm{d}x
 &\leq& \int_{B_{\sqrt{N}r}(x_{i})}|u(x,\tau)|^{2}\mathrm{d}x\\
 &\leq&\hat{C}_{9}\left(T^{2}e^{\frac{\hat{C}_{1}T}{T-\tau}}
 +e^{\frac{\hat{C}_{9}}{\tau}}\right)^{2}\left(\int_{\widetilde{\omega}_{i}}
 |u(x,\tau)|^{2}\mathrm{d}x\right)^\theta\\
 &&\times\left(\sup_{s\in[0,T]}\|u(\cdot,s)\|^{2}_{H^1(B_{\sqrt{N}\varrho r}(x_i))}\right)^{1-\theta},
 \end{eqnarray*}
where $\hat{C}_{9}=\hat{C}_{9}(r,N,\lambda,\rho)>0$ and $\theta=\theta(r,N,\lambda,\rho)\in(0,1)$. This, along with Young's  inequality,
implies that for each $\varepsilon>0,$
\begin{eqnarray*}
 \int_{Q_{r}(x_{i})}|u(x,\tau)|^{2}\mathrm{d}x
 &\leq&\hat{C}^{2}_{9}
 \biggl(T^{2}e^{\frac{\hat{C}_{1}T}{T-\tau}}+e^{\frac{\hat{C}_{9}}{\tau}}\biggl)^{2}
 \biggl(\varepsilon\theta\sup_{s\in[0,T]}\|u(\cdot,s)\|^{2}_{H^1(B_{\sqrt{N}\varrho r}(x_i))}\\
 &&\;\;\;\;\;\;\;\;+\varepsilon^{-\frac{\theta}{1-\theta}}(1-\theta)
 \int_{\widetilde{\omega}_{i}}|u(x,\tau)|^{2}\mathrm{d}x\biggl).
 \end{eqnarray*}
Then
\begin{equation}\label{3.44444}
\begin{array}{lll}
 &&\displaystyle{}\int_{\mathbb{R}^{N}}|u(x,\tau)|^{2}\mathrm{d}x
 =\sum_{i\in\mathbb{N}^+}\int_{Q_{r}(x_{i})}|u(x,\tau)|^{2}\mathrm{d}x\\
 &\leq&\hat{C}^{2}_{9}\left(T^{2}e^{\frac{\hat{C}_{1}T}{T-\tau}}+e^{\frac{\hat{C}_{9}}{\tau}}\right)^{2}
 \left(\varepsilon\theta \displaystyle\sum_{i\in\mathbb{N}^+}\sup_{s\in[0,T]}\|u(\cdot,s)\|^{2}_{H^1(B_{\sqrt{N}\varrho r}(x_i))}+\varepsilon^{-\frac{\theta}{1-\theta}}(1-\theta)
 \int_{\widetilde{\omega}}|u(x,\tau)|^{2}\mathrm{d}x\right).
\end{array}
\end{equation}
\par
 Next, let $R:=\varrho r$, we show that
\begin{equation}\label{yu-4-9-4}
\sum_{i\in\mathbb{N}^+}\sup_{s\in[0,T]}\|u(\cdot,s)\|^{2}_{H^1(B_{\sqrt{N}R}(x_i))}\leq \hat{C}_{10}(1+T)\sup_{s\in[0,T]}\|u(\cdot,s)\|^2_{H^1(\mathbb{R}^{N})},
 \end{equation}
 where $\hat{C}_{10}(\lambda,R,N)>0$. Indeed, we take a cut-off function $\psi\in C_0^\infty(\mathbb{R}^N;[0,1])$ so that $\mbox{supp}\psi\Subset Q_{2\sqrt{N}R}(0)$ and $\psi\equiv 1$ in
 $B_{\sqrt{N}R}(0)$. For each $i\in\mathbb{N}^+$, we let $\psi_i(x):=\psi(x-x_i)$ for any
 $x\in\mathbb{R}^+$. It is clear that $\mbox{supp}\psi_i\Subset Q_{2\sqrt{N}R}(x_i)$ and
 $\psi_i\equiv 1$ in $B_{\sqrt{N}R}(x_i)$. We suppose that
$$
\begin{cases}
    \mbox{Card}\{j\in\mathbb{N}^+: Q_{r}(x_1)\cap  Q_{2\sqrt{N}R}(x_j)\neq \emptyset\}=m^*,\\
    \mbox{Card}\{j\in\mathbb{N}^+: Q_{2\sqrt{N}R}(x_1)\cap  Q_{r}(x_j)\neq \emptyset\}=n^*.
\end{cases}
$$
    This means that, for each $k\in\mathbb{N}^+$,
$$
\begin{cases}
    \mbox{Card}\{j\in\mathbb{N}^+: Q_{r}(x_k)\cap  Q_{2\sqrt{N}R}(x_j)\neq \emptyset\}=m^*,\\
    \mbox{Card}\{j\in\mathbb{N}^+: Q_{2\sqrt{N}R}(x_k)\cap  Q_{r}(x_j)\neq \emptyset\}=n^*.
\end{cases}
$$
    For each $k$, we suppose $\{y_{k,i}\}_{i=1}^{n^*}
    =\{x_i|Q_{2\sqrt{N}R}(x_k)\cap Q_r(x_i)\neq \emptyset\}$.
    Thus, for any $f\in H^1(\mathbb{R}^N)$,
\begin{eqnarray}\label{yu-4-9-1}
    \sum_{k\in\mathbb{N}^+}\|f\|^{2}_{H^1(Q_{2\sqrt{N}R}(x_k))}
    &\leq&\sum_{k\in\mathbb{N}^+}\sum_{i=1}^{n^*}\|f\|^{2}_{H^1(Q_r(y_{k,i})}
    \leq \sum_{i=1}^{n^*}\sum_{k\in\mathbb{N}^+}\|f\|^2_{H^1(Q_r(y_{k,i}))}\nonumber\\
    &\leq&m^*n^*\sum_{k\in\mathbb{N}^+}\|f\|^2_{H^1(Q_r(x_k))}=m^*n^*\|f\|^2_{H^1(\mathbb{R}^N)}.
\end{eqnarray}
    For each $i\in\mathbb{N}^+$, we let $v_i=\psi_iu$. It is obvious that $v_i$ verifies
\begin{equation}\label{yu-4-9-2}
\begin{cases}
    (v_i)_t-\mbox{div}(A(x)\nabla v_i)=-2\nabla\psi_i\cdot (A(x)\nabla u)-u\mbox{div}(A(x)\nabla \psi_i)
    &\mbox{in}\;\;\; Q_{2\sqrt{N}R}(x_i)\times [0,T],\\
    v_i=0&\mbox{on}\;\; \partial Q_{2\sqrt{N}R}(x_i)\times [0,T],\\
    v_i(0)=\psi_iu_0 &\mbox{in}\;\;\;Q_{2\sqrt{N}R}(x_i).
\end{cases}
\end{equation}
    By the standard energy estimate for (\ref{yu-4-9-2}), one can easily check that
\begin{eqnarray*}
    &\;&\sup_{s\in[0,T]}\|u(\cdot,s)\|_{H^1(B_{\sqrt{N}R}(x_i))}^2
    \leq \sup_{s\in[0,T]}\|v_i(\cdot,s)\|_{H^1(Q_{2\sqrt{N}R}(x_i))}^2\nonumber\\
    &\leq& C(\lambda,N)(1+R^{-4})\left(\|u_0\|_{H^1(Q_{2\sqrt{N}R}(x_i))}^2+\int_0^T
    \|u(\cdot, s)\|_{H^1(Q_{2\sqrt{N}R}(x_i))}^2ds\right),
\end{eqnarray*}
    where $C(\lambda,N)>0$. This, along with (\ref{yu-4-9-1}), yields that
\begin{eqnarray*}
    &&\sum_{i\in\mathbb{N}^+}\sup_{s\in[0,T]}\|u(\cdot,s)\|_{H^1(B_{\sqrt{N}R}(x_i))}^2\\
    &\leq& C(\lambda,N)(1+R^{-4})\sum_{i\in\mathbb{N}^+}\left(\|u_0\|_{H^1(Q_{2\sqrt{N}R}(x_i))}^2+\int_0^T
    \|u(\cdot, s)\|_{H^1(Q_{2\sqrt{N}R}(x_i))}^2ds\right)\nonumber\\
    &\leq&m^*n^*C(\lambda,N)(1+R^{-4})(1+T)\sup_{s\in[0,T]}\|u(\cdot,s)\|^2_{H^1(\mathbb{R}^N)}.
\end{eqnarray*}
    Thus, (\ref{yu-4-9-4}) is true.
   By (\ref{yu-4-9-4}) and (\ref{3.44444}), we obtain that
 \begin{eqnarray*}
\int_{\mathbb{R}^{N}}|u(x,\tau)|^{2}\mathrm{d}x
 &\leq&\displaystyle{}\hat{C}^{2}_{9}\displaystyle{}\left(T^{2}
 e^{\frac{\hat{C}_{1}T}{T-\tau}}+e^{\frac{\hat{C}_{9}}{\tau}}\right)^{2}
 \biggl(\varepsilon\theta \hat{C}_{10}(1+T)\sup_{s\in[0,T]}\|u(\cdot,s)\|^2_{H^1(\mathbb{R}^{N})}
 \\
 &&\;\;\;\;\;\;\;\;\;+\varepsilon^{-\frac{\theta}{1-\theta}}(1-\theta)
 \int_{\widetilde{\omega}}|u(x,\tau)|^{2}\mathrm{d}x\biggl)\\
 \end{eqnarray*}
for each $ \varepsilon>0.$
 This implies that
 \begin{equation*}\label{3.444440}
\begin{array}{lll}
 \displaystyle{}\int_{\mathbb{R}^{N}}|u(x,\tau)|^{2}\mathrm{d}x
 &\leq&\displaystyle{}\hat{C}^{2}_{9}\displaystyle{} \left(T^{2}e^{\frac{\hat{C}_{1}T}{T-\tau}}+e^{\frac{\hat{C}_{9}}{\tau}}\right)^{2}
 \left(\hat{C}_{10}(1+T)\sup_{s\in[0,T]}\|u(\cdot,s)\|^2_{H^1(\mathbb{R}^{N})}\right)^\theta\\
 &&\times\left(\int_{\widetilde{\omega}}
 |u(x,\tau)|^{2}\mathrm{d}x\right)^{1-\theta}.
\end{array}
\end{equation*}
 By Lemma~\ref{lemma-2A2} (where $\tau$ and $T$ are replaced by  $T$ and $2T$, respectively),  we finish the proof of  Lemma \ref{yu-LEMMA-7-10-6}.
\end{proof}
\vskip 5pt
\noindent\textbf{Proof of Theorem \ref{yu-theorem-7-10-6}.}
First, by standard energy estimates of solutions to \eqref{yu-6-24-1} we have
\begin{equation}\label{yu-7-12-2}
	\|u(\cdot,t)\|_{H^1(\mathbb R^N)}\leq \frac{C_{23}e^{C_{23}t}}{\sqrt{t}}\|u_0\|_{L^2(\mathbb R^N)}
	\end{equation}
with $C_{23}=C_{23}(N,\lambda)>0$, for each $t\in(0,6T]$. Moreover, if $u_0\in H^1(\mathbb R^N)$, then
 \begin{equation}\label{yu-7-12-3}
 	\|u(\cdot,t)\|_{H^1(\mathbb R^N)}\leq C_{24}e^{C_{24}t}\|u_0\|_{H^1(\mathbb R^N)}\;\;\mbox{for each}\;\;
	t\in[0,6T],
 \end{equation}
where $C_{24}=C_{24}(N,\lambda)>0$.

Second, we consider the following equation
\begin{equation*}\label{yu-7-12-12}
\begin{cases}
	v_t-\mbox{div}(A(x)\nabla v)=0&\mbox{in}\;\;\mathbb R^N\times(0,4T),\\
	v(\cdot,0)=u(\cdot,\frac{T}{2})&\mbox{in}\;\;\mathbb R^N.
\end{cases}
\end{equation*}
	It is obvious that $v(\cdot,t)=u(\cdot,t+T/2)$ when $t\in[0,4T]$. Moreover, by (\ref{yu-7-12-2}) we have
	$u(\cdot,T/2)\in H^1(\mathbb R^N)$,
	which means that $v\in C([0,4T];H^1(\mathbb R^N))$.
	From Lemma \ref{yu-LEMMA-7-10-6} (where $T$ and $\tilde{\omega}$ are replaced by $T/2$ and $\omega$, respectively), it follows that there are $C>0$ and $\sigma\in(0,1)$ such that
\begin{equation*}\label{yu-7-13-1}
	\left\|v\left(\cdot,\frac{T}{2}\right)\right\|_{L^2(\mathbb R^N)}\leq C\left(T^{3}+e^{\frac{C}{T}}\right) \left\|v\left(\cdot,\frac{T}{2}\right)\right\|^\sigma_{L^2(\omega)}\left(\sup_{s\in[0,T]}\|v(\cdot,s)\|_{H^1(\mathbb R^N)}\right)^{1-\sigma}.
\end{equation*}
	This, along with (\ref{yu-7-12-3}), gives that
\begin{equation*}\label{yu-7-13-2}
	\left\|v\left(\cdot,\frac{T}{2}\right)\right\|_{L^2(\mathbb R^N)}\leq C C_{24}e^{C_{24}T}\left(T^{3}+e^{\frac{C}{T}}\right) \left\|v\left(\cdot,\frac{T}{2}\right)\right\|^\sigma_{L^2(\omega)}\|v(\cdot,0)\|^{1-\sigma}_{H^1(\mathbb R^N)}. \end{equation*}
Which is
\begin{equation*}\label{yu-7-13-3}
	\left\|u\left(\cdot,T\right)\right\|_{L^2(\mathbb R^N)}\leq  C C_{24}e^{C_{24}T}\left(T^{3}+e^{\frac{C}{T}}\right) \left\|u\left(\cdot,T\right)\right\|^\sigma_{L^2(\omega)}\left\|u\left(\cdot,\frac{T}{2}\right)\right\|^{1-\sigma}_{H^1(\mathbb R^N)}.
\end{equation*}
	This, together with (\ref{yu-7-12-2}), completes the proof.\qed

\section{Proof of Theorem \ref{jiudu4}}\label{finalproof}

Now, we are able to present the proof of  Theorem \ref{jiudu4}.\\

\noindent\textbf{Proof of Theorem \ref{jiudu4}.}
By Theorem \ref{yu-theorem-7-10-6} (where $r, x_{i}$ and $w_{i}$ are replaced by
$r, x_{i}$ and $w_{i}$, respectively) and Young's inequality,
for any $0\leq t_{1}<t_{2}\leq T$, we see that
 \begin{equation}\label{2019-7-9}
 \|u(t_{2})\|^{2}_{L^{2}(\mathbb{R}^{N})}\leq\varepsilon
 \|u(t_{1})\|^{2}_{L^{2}(\mathbb{R}^{N})}+
 \frac{C_{25}}{\varepsilon^{\alpha}}e^{\frac{C_{26}}{t_{2}-t_{1}}}
 \|u(t_{2})\|^{2}_{L^{2}(\omega)} \ \ \ \mathrm{for\ each}\ \varepsilon>0,
 \end{equation}
where $C_{25}:=\left(Ce^{CT}\right)^{2/(1-\sigma)}$,
$C_{26}:=2C/(1-\sigma)$
and $\alpha:= \sigma/(1-\sigma)$.
Let $l$ be a density point of $E$. According to Proposition 2.1 in \cite{Phung-Wang-2013},
for each $\kappa>1$, there exists $l_{1}\in (l,T)$, depending on $\kappa$ and $E$,
so that the sequence $\{l_{m}\}_{m\in\mathbb{N}^+}$, given by
$$
l_{m+1}=l+\frac{1}{\kappa^{m}}(l_{1}-l)\;\;\mbox{for each}\;\;m\in\mathbb{N}^+,
$$
satisfies that
 \begin{equation}\label{3.2525251}
l_{m}-l_{m+1}\leq 3|E\cap(l_{m+1},l_{m})|.
 \end{equation}
\par
Next, let $0<l_{m+2}<l_{m+1}\leq t<l_{m}<l_{1}<T$. It follows from (\ref{2019-7-9}) that
\begin{equation}\label{3.2525252}
\|u(t)\|^{2}_{L^{2}(\mathbb{R}^{N})}\leq \varepsilon\|u(l_{m+2})\|^{2}_{L^{2}(\mathbb{R}^{N})}
+\frac{C_{25}}{\varepsilon^{\alpha}}
e^{\frac{C_{26}}{t-l_{m+2}}}\|u(t)\|^{2}_{L^{2}(\omega)} \ \mathrm{for\ each}\ \varepsilon>0.
 \end{equation}
By a standard energy estimate, we have that
$$
\|u(l_{m})\|_{L^{2}(\mathbb{R}^{N})}\leq C_{27}\|u(t)\|_{L^{2}(\mathbb{R}^{N})},
$$
where $C_{27}=C_{27}(\lambda,N)\geq1.$
This, along with (\ref{3.2525252}), implies that
$$
\|u(l_{m})\|^{2}_{L^{2}(\mathbb{R}^{N})}\leq C^{2}_{27}\left(\varepsilon\|u(l_{m+2})
\|^{2}_{L^{2}(\mathbb{R}^{N})}+
\frac{C_{25}}{\varepsilon^{\alpha}}
e^{\frac{C_{26}}{t-l_{m+2}}}\|u(t)\|^{2}_{L^{2}(\omega)}\right)
\ \mathrm{for\ each}\ \varepsilon>0,
$$
which indicates that
$$\|u(l_{m})\|^{2}_{L^{2}(\mathbb{R}^{N})}
\leq \varepsilon\|u(l_{m+2})\|^{2}_{L^{2}(\mathbb{R}^{N})}
+\frac{C_{28}}{\varepsilon^{\alpha}}
e^{\frac{C_{26}}{t-l_{m+2}}}\|u(t)\|^{2}_{L^{2}(\omega)}
 \ \mathrm{for\ each}\ \varepsilon>0,
 $$
where $C_{28}=C_{27}^{2(1+\alpha)}C_{25}$.
Integrating the latter inequality over $ E\cap(l_{m+1},l_{m})$, we get that
\begin{equation}\label{3.2525253}
\begin{array}{lll}
 \displaystyle{}|E\cap(l_{m+1},l_{m})|\|u(l_{m})\|^{2}_{L^{2}(\mathbb{R}^{N})}
 &\leq&\displaystyle{}\varepsilon |E\cap(l_{m+1},l_{m})|\|u(l_{m+2})\|^{2}_{L^{2}(\mathbb{R}^{N})}\\
 &&\displaystyle{}+\frac{C_{28}}
 {\varepsilon^{\alpha}}e^{\frac{C_{26}}{l_{m+1}-l_{m+2}}}
 \int_{l_{m+1}}^{l_{m}}\chi_{E}\|u(t)\|^{2}_{L^{2}(\omega)}\mathrm{d}t
 \ \mathrm{for\ each}\ \varepsilon>0.
\end{array}
\end{equation}
Here and throughout the proof of Theorem~\ref{jiudu4}, $\chi_{E}$ denotes the characteristic function of $E$. Since $l_{m}-l_{m+1}=(\kappa-1)(l_{1}-l)/\kappa^{m},$ by (\ref{3.2525253}) and (\ref{3.2525251}), we obtain that
\begin{eqnarray*}
\|u(l_{m})\|^{2}_{L^{2}(\mathbb{R}^{N})}&\leq& \varepsilon \|u(l_{m+2})\|^{2}_{L^{2}(\mathbb{R}^{N})}+\frac{1}{|E\cap(l_{m+1},l_{m})|}
\frac{C_{28}}{\varepsilon^{\alpha}}
e^{\frac{C_{26}}{l_{m+1}-l_{m+2}}}
\int_{l_{m+1}}^{l_{m}}\chi_{E}\|u(t)\|^{2}_{L^{2}(\omega)}\mathrm{d}t\\
&\leq&\frac{3\kappa^{m}}{(l_{1}-l)(\kappa-1)}
\frac{C_{28}}{\varepsilon^{\alpha}}
e^{C_{26}\left(\frac{1}{l_{1}-l}\frac{\kappa^{m+1}}{\kappa-1}\right)}
\int_{l_{m+1}}^{l_{m}}\chi_{E}\|u(t)\|^{2}_{L^{2}(\omega)}\mathrm{d}t+
\varepsilon \|u(l_{m+2})\|^{2}_{L^{2}(\mathbb{R}^{N})}
 \end{eqnarray*}
 for each $\varepsilon>0$.
This yields that
\begin{equation}\label{3.2525254}
\begin{array}{lll}
 \displaystyle{}\|u(l_{m})\|^{2}_{L^{2}(\mathbb{R}^{N})}&\leq& \displaystyle{}
 \frac{1}{\varepsilon^{\alpha}}\frac{3}{\kappa}
 \frac{C_{28}}{C_{26}}
 e^{2C_{26}\left(\frac{1}{l_{1}-l}
 \frac{\kappa^{m+1}}{\kappa-1}\right)}\int_{l_{m+1}}^{l_{m}}\chi_{E}
 \|u(t)\|^{2}_{L^{2}(\omega)}\mathrm{d}t\displaystyle{}+
 \varepsilon \|u(l_{m+2})\|^{2}_{L^{2}(\mathbb{R}^{N})}
\end{array}
\end{equation}
for each $\varepsilon>0$.
Denote $d:= 2C_{26}/[\kappa(l_{1}-l)(\kappa-1)]$.
It follows from (\ref{3.2525254}) that
\begin{eqnarray*}
\varepsilon^{\alpha}e^{-d\kappa^{m+2}}\|u(l_{m})\|^{2}_{L^{2}(\mathbb{R}^{N})}
-\varepsilon^{1+\alpha}e^{-d\kappa^{m+2}}\|u(l_{m+2})\|^{2}_{L^{2}(\mathbb{R}^{N})}
\leq\frac{3}{\kappa}\frac{C_{28}}
{C_{26}}\int_{l_{m+1}}^{l_{m}}\chi_{E}\|u(t)\|^{2}_{L^{2}(\omega)}\mathrm{d}t
\end{eqnarray*}
for each $\varepsilon>0$.
Choosing $\varepsilon=e^{-d\kappa^{m+2}}$ in the latter inequality, we observe that
\begin{equation}\label{3.25252555}
\begin{array}{lll}
 \displaystyle{}e^{-(1+\alpha)d\kappa^{m+2}}\|u(l_{m})\|^{2}_{L^{2}(\mathbb{R}^{N})}
 -e^{-(2+\alpha)d\kappa^{m+2}}\|u(l_{m+2})\|^{2}_{L^{2}(\mathbb{R}^{N})}
 \leq\displaystyle{}\frac{3}{\kappa}\frac{C_{28}}
 {C_{26}}\int_{l_{m+1}}^{l_{m}}\chi_{E}\|u(t)\|^{2}_{L^{2}(\omega)}\mathrm{d}t.
\end{array}
\end{equation}
Take $\kappa=\sqrt{(\alpha+2)/(\alpha+1)}$ in (\ref{3.25252555}). Then we have that
\begin{eqnarray*}
e^{-(2+\alpha)d\kappa^{m}}\|u(l_{m})\|^{2}_{L^{2}(\mathbb{R}^{N})}
-e^{-(2+\alpha)d\kappa^{m+2}}\|u(l_{m+2})\|^{2}_{L^{2}(\mathbb{R}^{N})}
\leq \frac{3}{\kappa}\frac{C_{28}}{C_{26}}
\int_{l_{m+1}}^{l_{m}}\chi_{E}\|u(t)\|^{2}_{L^{2}(\omega)}\mathrm{d}t.
\end{eqnarray*}
Changing $m$ to $2m'$ and summing the above inequality from $m'=1$ to infinity give the desired result. Indeed,
\begin{eqnarray*}
&&C_{27}^{-2}e^{-(2+\alpha)d\kappa^{2}}\|u(T)\|^{2}_{L^{2}(\mathbb{R}^{N})}
\leq e^{-(2+\alpha)d\kappa^{2}}\|u(l_{2})\|^{2}_{L^{2}(\mathbb{R}^{N})}\\
&\leq&\sum_{m'=1}^{+\infty}\left(e^{-(2+\alpha)d\kappa^{2m'}}\|u(l_{2m'})\|_{L^{2}(\mathbb{R}^{N})}
-e^{-(2+\alpha)d\kappa^{2m'+2}}\|u(l_{2m'+2})\|^{2}_{L^{2}(\mathbb{R}^{N})}\right)\\
&\leq& \frac{3}{\kappa}\frac{C_{28}}{C_{26}}\sum_{m'=1}^{+\infty}
\int_{l_{2m'+1}}^{l_{2m'}}\chi_{E}\|u(t)\|^{2}_{L^{2}(\omega)}\mathrm{d}t
\leq \frac{3}{\kappa}\frac{C_{28}}{C_{26}}\int_{0}^{T}\chi_{E}\|u(t)\|^{2}_{L^{2}(\omega)}\mathrm{d}t.
 \end{eqnarray*}

In summary, we finish the proof of Theorem~\ref{jiudu4}.\qed

\section{Appendix: Proof of (\ref{yu-6-22-4})}

Let $m\in\mathbb{N}^+$ and $a_j=1-j/(2m)$ for $j=0,1,\dots,m+1$. For each $j\in\{0,1,\cdots, m\}$, we define a cutoff function
\begin{equation*}\label{yu-6-19-4}
	h_j(s):=
\begin{cases}
	0&\mbox{if}\;\;|s|>a_j,\\
	\frac{1}{2}\left[1+\cos\left(\frac{\pi(a_{j+1}-|s|)}{a_{j+1}-a_j}\right)\right]
	&\mbox{if}\;\;a_{j+1}\leq |s|\leq a_j,\\
	1&\mbox{if}\;\;|s|<a_{j+1}.
\end{cases}
\end{equation*}
Clearly,
\begin{equation*}\label{yu-6-20-2}
	|h_j'(s)|\leq m\pi \;\;\mbox{for any}\;\;s\in\mathbb{R}.
\end{equation*}
Denote
$p_j=\partial^jp/\partial\xi^j$, $j=0,1,\dots,m$. Then $p_j$ verifies
\begin{equation}\label{yu-6-19-6}
	\mbox{div}(A\nabla p_j(\cdot,\cdot,\mu))+i\mbox{sign}(\mu)p_{j+2}(\cdot,\cdot,\mu)
	=0\;\;\mbox{in}\;\;B_R(x_0)\times\mathbb{R}.
\end{equation}	
	Let
\begin{equation*}\label{yu-6-19-7}
	\eta_j(x,\xi)=h_j\left(\frac{|x-x_0|}{R}\right)h_j\left(\frac{\xi}{R}\right)\quad\text{for}\;\;(x,\xi)\in B_R(x_0)\times\mathbb{R}.
\end{equation*}
	Multiplying first (\ref{yu-6-19-6}) by
	$\bar{p}_j\eta_j^2$ and then integrating by parts over $D_j=B_{a_jR}(x_0)\times(-a_jR,a_jR)$, we obtain
\begin{eqnarray}\label{yu-6-19-8}
	&\;&-\int_{D_j}\nabla\bar{p}_j\cdot(A\nabla p_j)\eta_j^2dxd\xi
	-i\mbox{sign}(\mu)\int_{D_j}|p_{j+1}|^2\eta_j^2dxd\xi\nonumber\\
	&=&\int_{D_j}\nabla\eta_j^2\cdot(A\nabla p_i)\bar{p}_jdxd\xi+i\mbox{sign}(\mu)\int_{D_j}p_{i+1}\partial_\xi\eta^2_j\bar{p}_jdxd\xi.
\end{eqnarray}
Since $\nabla\bar{p}_j\cdot(A\nabla p_j)$ and $|p_{j+1}|^2$ are real-valued,
 by (\ref{yu-6-19-8}) we get
\begin{eqnarray}\label{yu-6-19-9}
	&\;&\left(\int_{D_j}\nabla\bar{p}_j\cdot(A\nabla p_j)\eta_j^2dxd\xi\right)^2
	+|\mbox{sign}(\mu)|^2\left(\int_{D_j}|p_{j+1}|^2\eta_j^2dxd\xi\right)^2\nonumber\\
	&\leq&2\left(\int_{D_j}|\nabla \eta_j^2\cdot(A\nabla p_j)||p_j|dxd\xi\right)^2
+2\left(\int_{D_j}|p_{j+1}||p_j||\partial_{\xi}\eta^2_j|dxd\xi\right)^2
	:=2\sum_{i=1}^2I_i.
\end{eqnarray}
\par
	Next, we will estimate $I_i$ $(i=1,2)$ one by one. By Young's inequality, we have
\begin{eqnarray*}\label{yu-6-21-1}
	&\;&\int_{D_j}|\nabla \eta_j^2\cdot(A\nabla p_j)||p_j|dxd\xi
	\leq 2\lambda\int_{D_j}|\nabla \eta_j||\eta_j||\nabla p_j||p_j|dxd\xi\nonumber\\
	&\leq&\epsilon_1\lambda\int_{D_j}|\nabla p_j|^2\eta_j^2dxd\xi
	+\frac{\lambda}{\epsilon_1}\int_{D_j}|p_j|^2|\nabla\eta_j|^2dxd\xi\nonumber\\
	&\leq&\epsilon_1\lambda\int_{D_j}|\nabla p_j|^2\eta_j^2dxd\xi
	+\frac{\lambda\pi^2m^2}{R^2\epsilon_1}\int_{D_j}|p_j|^2dxd\xi.
\end{eqnarray*}
Thus, we get that
\begin{equation}\label{yu-6-21-2}
	I_1\leq 2\lambda^2\epsilon_1^2\left(\int_{D_j}|\nabla p_j|^2\eta_j^2dxd\xi\right)^2
	+\frac{2\lambda^2\pi^4m^4}{R^4\epsilon^2_1}\left(\int_{D_j}|p_j|^2dxd\xi\right)^2, \;\;\forall\ \epsilon_1>0.
\end{equation}
Furthermore,
\begin{eqnarray*}\label{yu-6-21-3}
	\int_{D_j}|p_{j+1}|p_j||\partial_{\xi}\eta^2_j|dxd\xi
	&\leq&\epsilon_2\int_{D_j}|p_{j+1}|^2\eta_j^2dxd\xi
	+\frac{1}{\epsilon_2}\int_{D_j}|p_j|^2|\partial_\xi\eta_j|^2dxd\xi\nonumber\\
	&\leq&\epsilon_2\int_{D_j}|p_{j+1}|^2\eta_j^2dxd\xi
	+\frac{m^2\pi^2}{R^2\epsilon_2}\int_{D_j}|p_j|^2dxd\xi.
\end{eqnarray*}	
Thus, we have that
\begin{equation}\label{yu-6-21-4}
	I_2\leq 2\epsilon_2^2\left(\int_{D_j}|p_{j+1}|^2\eta_j^2dxd\xi\right)^2
	+\frac{2\pi^4m^4}{R^4\epsilon_2^2}\left(\int_{D_j}|p_j|^2dxd\xi\right)^2, \;\;\forall \ \epsilon_2>0.
\end{equation}
Taking $\epsilon_1=\sqrt{2}/(4\lambda^2)$, $\epsilon_2=1/4$
 in  (\ref{yu-6-21-2})
		and (\ref{yu-6-21-4}), respectively,
we derive that
\begin{eqnarray}\label{yu-6-21-8}
	\sum_{i=1}^2I_i&\leq&\frac{\lambda^{-2}}{4}\left(\int_{D_j}|\nabla p_j|^2\eta_j^2dxd\xi
	\right)^2+\frac{1}{8}\left(\int_{D_j}|p_{j+1}|^2\eta_j^2dxd\xi\right)^2\nonumber\\
	&\;&+\frac{(2+\lambda^{6})4^{2}\pi^4m^{4}}{R^{4}}
	\left(\int_{D_j}|p_j|^2dxd\xi\right)^2.
\end{eqnarray}	
	On the other hand, by the uniform ellipticity condition (\ref{yu-11-28-2}),  we find that
\begin{equation*}\label{yu-6-22-2}
	\left(\int_{D_j}\nabla \bar{p}_j\cdot(A\nabla p_j)\eta_j^2dxd\xi\right)^2
	\geq \lambda^{-2}\left(\int_{D_j}|\nabla p_j|^2\eta_j^2dxd\xi\right)^2.
\end{equation*}
This, together with  (\ref{yu-6-19-9}) and (\ref{yu-6-21-8}), gives that for each $j\in\{0,1,\ldots,m-1\}$,
\begin{eqnarray}\label{yu-6-22-3}
	\int_{D_{j+1}}|p_{j+1}|^2dxd\xi&=&\int_{D_{j+1}}|p_{j+1}|^2\eta^2_jdxd\xi
\leq \sqrt{\frac{4^{3}(2+\lambda^{6})\pi^4m^{4}}{3R^{4}}}
	\int_{D_j}|p_j|^2dxd\xi\nonumber\\
	&\leq&\frac{C_{4}m^2}{R^2}\int_{D_j}|p_j|^2dxd\xi,
\end{eqnarray}
	where $C_{4}=8\pi^2\sqrt{2+\lambda^{6}}$.
	 Here, we used the definition of $D_j$. Iterating (\ref{yu-6-22-3}) for each
	$j\in\{0,1,\ldots,m-1\}$, by the fact that $p_0=p=\hat{v}$ we obtain \eqref{yu-6-22-4}
	and complete the proof.	
\qed

\bigskip

\noindent \textbf{Acknowledgement}. The authors thank the financial support by
the National Natural Science Foundation of China under grants 11971363.


\begin{thebibliography}{99}
\bibitem{Apraiz-Escauriaza22} J. Apraiz, L. Escauriaza, Null-control and measurable sets, ESAIM Control Optim. Calc. Var.
19 (2013), 239-254.

\bibitem{Apraiz-Escauriaza-Wang-Zhang} J. Apraiz, L. Escauriaza, G. Wang, C. Zhang, Observability inequalities and measurable sets, J. Eur. Math. Soc.  16 (2014), 2433-2475.

\bibitem{B} V. Barbu, Exact null internal controllability for the heat equation on unbounded convex domains, ESAIM: COCV 20 (2014), 222-235.

\bibitem{Brezis} H. Brezis, Functional Analysis, Sobolev Space and Partial Differential Equations, Springer, New York, 2011.

\bibitem{CMZ} V. R. Cabanillas, S. B. de Menezes, E.  Zuazua, Null controllability in unbounded domains for the semilinear heat equation with nonlinearities involving gradient terms, J. Optim. Theory  App. 110 (2001), 245-264.

\bibitem{CMV} P. Cannarsa, P.  Martinez, J.  Vancostenoble, Null controllability of the heat equation in unbounded domains by a finite measure control region, ESAIM: COCV 10 (2004), 381-408.

\bibitem{Canuto-Rosset-Vessella} B. Canuto, E. Rosset, S. Vessella, Quantitative estimates of unique continuation for parabolic equations and inverse initial-boundary value problems with unknown boundaries, Trans. Amer. Math. Soc. 354 (2002), 491-535.

\bibitem{Duan-Wang-Zhang} Y. Duan, L. Wang, C. Zhang, Observability inequalities for the heat equation with bounded potentials on the whole
space, SIAM J. Control Optim. 58 (4) (2020) 1939-1960.

\bibitem{EV17} M. Egidi, I. Veseli\'{c}, Sharp geometric condition for null-controllability of the heat equation on $\mathbb R^d$ and consistent estimates on the control cost, Arch. Math. (Basel) 111 (2018), 85-99.

\bibitem{EV19} M. Egidi, A. Seelmann, The reflection principle in the control problem of the heat equation, arXiv preprint, arXiv:1902.08141, (2019).



\bibitem{Escauriaza-Fernandez-Vessella-2006} L. Escauriaza, F. J. Fern\'endez, S. Vessella,
Doubling properties of Caloric functions, Appl. Anal. 85 (2006), 205-223.

\bibitem{Escauriaza-Montaner-Zhang} L. Escauriaza, S. Montaner, C. Zhang,
	Observation from measurable sets for parabolic analytic evolutions and applications, J. Math. Pures Appl. 9
	(5) (2015), 837-867.

\bibitem{Escauriaza-Montaner-Zhang2} L. Escauriaza, S. Montaner, C. Zhang, Analyticity of solutions to parabolic evolutions and
applications, SIAM J. Math. Anal. 49 (2017), 4064-4092.

\bibitem{fz}E. Fern\'andez-Cara, E. Zuazua, Null and approximate controllability for weakly blowing up semilinear heat equations, Ann. Inst. H. Poincar$\acute{\mathrm{e}}$ Anal. Non Lin$\acute{\mathrm{e}}$aire 17 (2000), 583-616.

\bibitem{FI} A. V. Fursikov, O. Y. Imanuvilov, Controllability of Evolution Equations, Seoul National University, 1996.

\bibitem{Gilbarg-Trudinger} D. Gilbarg, N. S. Trudinger, Elliptic Partial Differential Equations of Second Order, Springer, 1983.

\bibitem{Gde} M. Gonz\'alez-Burgos, L. de Teresa, Some results on controllability for linear and nonlinear heat equations in unbounded domains, Adv. Diff. Equ., 12 (2007), 1201-1240.

\bibitem{LO}
E. M. Landis, O. A. Oleinik, Generalized analyticity and some related properties of solutions of elliptic and parabolic equations, Russian Math. Surveys 29 (1974), 195-212.

\bibitem{RM} J. Le Rousseau, I. Moyano, Null-controllability of the Kolmogorov equation in the whole phase space, J. Differ. Equ. 260 (2016), 3193-3233.

\bibitem{leb} G. Lebeau, I. Moyano, Spectral inequalities for the Schr{\"o}dinger operator, arXiv preprint, arXiv:1901.03513,(2019).

\bibitem{Lebeau-Zuazua} G. Lebeau, E. Zuazua, Null-controllability of a system of linear thermoelasticity, Arch. Rational Mech. Anal. 141 (1998), 297-329.

\bibitem{Lin90} F. H. Lin, A uniqueness theorem for parabolic equations, Comm. Pure Appl. Math. 43 (1990), 127-136.

\bibitem{Lin-1991} F. H. Lin, Nodal sets of solutions of elliptic and parabolic equations, Comm. Pure Appl. Math.
     44 (1991), 287-308.

\bibitem{Logunov-Malinnikova} A. Logunov, E. Malinnikova, Quantitative propagation of smallness for solutions of elliptic
equations, Proceedings of the International Congress of Mathematicians-Rio de Janeiro 2018. Vol. III. Invited lectures, 2391-2411, World Sci. Publ., Hackensack, NJ, 2018.

\bibitem{LU-2013} Q. L\"u, A lower bound on local energy of partial sum of eigenfunctions for Laplace-Beltrami operators, ESAIM: COCV 19 (1) (2013), 255-273.

\bibitem{MV}
E. Malinnikova, S. Vessella, Quantitative uniqueness for elliptic equations with singular lower order terms, Math. Ann. 353 (2012), 1157-1181.

\bibitem{MZ} S. Micu, E. Zuazua, On the lack of null-controllability of the heat equation on the half-line, Trans. Amer. Math. Soc., 353 (2001), 1635-1659.

\bibitem{MZb} S. Micu, E. Zuazua, On the lack of null-controllability of the heat equation on the half space, Port. Math. 58 (2001), 1-24.

\bibitem{M05a} L. Miller, On the null-controllability of the heat equation in unbounded domains, Bull. Sci.  math. 129 (2005), 175-185.

\bibitem{nttv} I. Naki$\acute{\mathrm{c}}$, M. T$\ddot{\mathrm{a}}$ufer,  M. Tautenhahn,  I. Veseli$\acute{\mathrm{c}}$, Sharp estimates and homogenization of the control cost of the heat equation on large domains, ESAIM: COCV 26 (2020) 26-54.

\bibitem{Phung-Wang-2010} K. D. Phung, G. Wang, Quantitative unique continuation for the semilinear heat equation in a convex domain, J. Funct. Anal. 259 (2010), 1230-1247.

\bibitem{Phung-Wang-2013} K. D. Phung, G. Wang, An observability estimate for parabolic equations from
a measurable set in time and its applications, J. Eur. Math. Soc.  15 (2013), 681-703.

\bibitem{Phung-Wang-Zhang} K. D. Phung, L. Wang, C. Zhang, Bang-bang property for time optimal control
of semilinear heat equation, Ann. Inst. H. Poincar\'e Anal. Non Lin\'eaire 31 (2014), 477-499.

\bibitem{WangZhangZhang} G. Wang, M. Wang, C. Zhang, Y. Zhang,  Observable set, observability, interpolation inequality and spectral inequality for the heat equation in $\mathbb{R}^N$,  J. Math. Pures  Appl. 126 (2019), 144-194.

\bibitem{wang-zhang1} G. Wang, C. Zhang, Observability inequalities from measurable sets for some evolution equations, SIAM J. Control Optim. 55 (2017), 1862-1886.

\bibitem{Z16} Y. Zhang,  Unique continuation estimates for the Kolmogorov equation in the whole space,
C. R. Math. Acad. Sci. Paris., 354 (2016), 389-393.








































\end{thebibliography}
\end{document}